\documentclass[11pt]{amsart}

\usepackage[T1]{fontenc}
\usepackage{amssymb}
\usepackage{enumitem}
\usepackage[margin=1in]{geometry}
\usepackage[hidelinks]{hyperref}
\hypersetup{
  pdftitle={Soft normed linear spaces via soft elements: decomposition, convergence, and operators},
  pdfauthor={Subhasis Ray},
  pdfsubject={Soft normed linear spaces and soft linear operators},
  pdfkeywords={soft set, soft element, soft normed linear space, selection space, box topology, soft linear operator}
}

\newcommand{\SE}{\operatorname{SE}}
\newcommand{\K}{\mathbb{K}}
\newcommand{\R}{\mathbb{R}}
\newcommand{\N}{\mathbb{N}}
\newcommand{\RA}{\R^{A}}
\newcommand{\KA}{\K^{A}}
\newcommand{\Pw}{\mathcal{P}}
\newcommand{\sle}{\mathrel{\widetilde{\leq}}}
\newcommand{\slt}{\mathrel{\widetilde{<}}}
\newcommand{\sge}{\mathrel{\widetilde{\geq}}}
\newcommand{\ins}{\mathrel{\in_{s}}}
\newcommand{\subs}{\mathrel{\subseteq_{s}}}
\newcommand{\zbar}{\bar{0}}
\newcommand{\snorm}{N}
\newcommand{\bnd}{\mathrm{b}}
\newcommand{\taubox}{\tau_{\Box}}
\newcommand{\tauprod}{\tau_{\Pi}}
\newcommand{\tauunif}{\tau_{\infty}}

\setlist[itemize]{leftmargin=2.2em,itemsep=2pt,topsep=4pt}
\setlist[enumerate]{leftmargin=2.2em,itemsep=2pt,topsep=4pt}

\theoremstyle{definition}
\newtheorem{definition}{Definition}[section]
\newtheorem{example}[definition]{Example}
\newtheorem{remark}[definition]{Remark}

\theoremstyle{plain}
\newtheorem{theorem}[definition]{Theorem}

\newtheorem{proposition}[definition]{Proposition}
\newtheorem{corollary}[definition]{Corollary}
\numberwithin{equation}{section}

\title[Soft normed linear spaces via soft elements]
{Soft normed linear spaces via soft elements: decomposition, convergence, and operators}

\author{Subhasis Ray}
\address{Department of Mathematics, Siksha-Bhavana, Visva-Bharati, Santiniketan 731235, West Bengal, India}
\email{subhasis.ray@visva-bharati.ac.in}
\thanks{ORCID: 0000-0003-1100-4632}
\subjclass[2020]{Primary 46S40; Secondary 47S40, 03E72}
\keywords{soft set, soft element, soft normed linear space, selection space, box topology, soft linear operator}
\date{August 2026}

\begin{document}

\begin{abstract}
Let $F\colon A\to\Pw(X)$ be a soft linear subspace of a linear space $X$ over
$\K$. Its soft elements form the product $\prod_{t\in A}F(t)$. We use this
representation to characterize selection spaces, decompose soft norms and soft
linear operators into their sectionwise parts, and compare soft, uniform, and
componentwise convergence. We also characterize completeness and continuity in
the box, extended-supremum, and product settings. Neumann-series and fixed-point
results are obtained as applications.
\end{abstract}

\maketitle

\section{Introduction}

Molodtsov introduced soft set theory in \cite{Molodtsov}, and Maji, Biswas, and
Roy developed its early theory in \cite{Maji}. Soft topological spaces were
introduced in \cite{Cagman,ShabirNaz}; a recent survey is given in
\cite{Alcantud}. Das and Samanta studied soft real numbers and soft elements
\cite{DSreal}. These ideas were later used in soft metric spaces
\cite{DSmetric}, soft linear and soft normed linear spaces \cite{DMS}, and soft
linear operators \cite{DSoperators}. Other forms of soft normed spaces and soft
Banach algebras appear in \cite{Yazar,ThakurSamanta}.

A soft element of $F$ chooses one point from each section $F(t)$, so
\[
   \SE(F)=\prod_{t\in A}F(t).
\]
The soft-element viewpoint was developed for soft sets, groups, topology,
rings and ideals, and measure-theoretic structures
\cite{RayGoldar2017,GoldarRayTopology2017,GoldarRay2019,GoldarRayRing,
GoldarRayTopGroup,GoldarRayMeasure,GoldarRayLebesgue}. Related work treats soft
ideals in soft $\sigma$-topologies, filters and nets, topological modules,
bitopological spaces and groups, uniform spaces, and metric spaces
\cite{GoldarRaySarkarIdeals,GoldarRaySarkarFilters,RayModules,RayBitop,
RayBitopGroups,RayUniform,RayMetric}.

For each $t\in A$, the coordinate idempotent $e_t=\chi_{\{t\}}$ isolates the
$t$-th section. The identities $N(e_ta)=e_tN(a)$ and
$T(e_ta)=e_tT(a)$ show that both a soft norm and a soft linear operator act
sectionwise. The coordinate-independence conditions used in
\cite{DMS,DSoperators} therefore follow from the standard soft-scalar axioms.

The first result characterizes selection spaces by closure under arbitrary
coordinatewise patching. For soft linear operators, sectionwise boundedness is
equivalent to box continuity, whereas continuity for the extended-supremum
topology requires one bound valid for all sections.

The related construction in \cite{RayMetric} uses bounded section metrics. Here
untruncated norms give an extended metric on the full selection space and a
supremum norm on the bounded selections.

\section{Preliminaries}

 Throughout, $\K\in\{\R,\mathbb C\}$ and
$\N=\{1,2,\ldots\}$. Let $A$ be a nonempty parameter set and
let $X$ be a nonempty set. A \emph{soft set} over $X$ is a map
$F\colon A\to\Pw(X)$. Assume that $F(t)\ne\varnothing$ for every $t\in A$.
A \emph{soft element} of $F$ is a map $a\colon A\to X$ such that
$a(t)\in F(t)$ for every $t\in A$. We write $a\ins F$ and
\[
   \SE(F):=\{a\colon A\to X:a(t)\in F(t)\text{ for every }t\in A\}
          =\prod_{t\in A}F(t).
\]
For soft sets $H$ and $F$ over the same universe, $H\subs F$ means
$H(t)\subseteq F(t)$ for every $t\in A$.

A \emph{soft scalar} is a map $r\colon A\to\K$. The set of soft scalars is
$\KA$, with pointwise operations. For $k\in\K$, write $\bar{k}$ for the constant
soft scalar with value $k$. For $B\subseteq A$, let $\chi_B$ be its
characteristic soft scalar, and set
\[
       e_t:=\chi_{\{t\}}\qquad(t\in A).
\]
Each $\chi_B$ is an idempotent of $\KA$. A soft real number is a map from $A$ to
$\R$, that is, an element of $\RA$. We use the pointwise order
\[
 r\sle s \iff r(t)\le s(t)\quad(t\in A),
 \qquad
 r\slt s \iff r(t)<s(t)\quad(t\in A).
\]
A soft real $\varepsilon$ is \emph{strictly positive} if
$\varepsilon(t)>0$ for every $t\in A$. The zero soft real is denoted by
$\zbar$.

From Section~3 onward, $X$ is a linear space over $\K$. A soft set $F$ over
$X$ is a \emph{soft linear subspace} if each $F(t)$ is a linear subspace of
$X$. We allow the sections to vary with $t$. The absolute case used in
\cite{DMS} is obtained by taking $F(t)=X$ for every $t$. The selection space
$\SE(F)$ is then a vector space under pointwise operations and a $\KA$-module
under
\[
    (ra)(t)=r(t)a(t) \qquad(r\in\KA,\ a\in\SE(F)).
\]
Its zero element is denoted by $\theta$. For $t\in A$ and $\alpha\in F(t)$,
define
\[
 \iota_t(\alpha)(s)=
 \begin{cases}
 \alpha,&s=t,\\
 0,&s\ne t.
 \end{cases}
\]
Then $e_ta=\iota_t(a(t))$ for every $a\in\SE(F)$.

\section{Selection spaces and patchwork representation}

\begin{proposition}
If $F$ is a soft linear subspace of $X$, then $\SE(F)$ is a linear subspace of
$X^{A}$ and a module over $\KA$. In particular, $e_ta\in\SE(F)$ and
$\iota_t(\alpha)\in\SE(F)$ for every $a\in\SE(F)$, $t\in A$, and
$\alpha\in F(t)$.
\end{proposition}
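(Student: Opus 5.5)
The plan is to check the subspace and module axioms coordinatewise, using only the fact that each $F(t)$ is a linear subspace of $X$.

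\emph{Linear subspace of $X^{A}$.} The space $X^{A}$ of all maps $A\to X$ is a vector space over $\K$ under pointwise operations, and $\SE(F)\subseteq X^{A}$ by definition. I will verify the three subspace conditions in turn.
\begin{itemize}
\item The zero map $\theta$ lies in $\SE(F)$, since $0\in F(t)$ for every $t$.
\item If $a,b\in\SE(F)$ and $\lambda\in\K$, then for each $t\in A$ the value $(a+\lambda b)(t)=a(t)+\lambda b(t)$ lies in $F(t)$. This uses that $F(t)$ is closed under addition and scalar multiplication.
\item Hence $a+\lambda b\in\SE(F)$, so $\SE(F)$ is a linear subspace.
\end{itemize}

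\emph{$\KA$-module structure.} First I will show closure under the soft-scalar action. For $r\in\KA$ and $a\in\SE(F)$ we have $(ra)(t)=r(t)a(t)\in F(t)$, because $r(t)\in\K$ and $F(t)$ is a subspace. So the action maps $\KA\times\SE(F)$ into $\SE(F)$.

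Next come the module axioms:
\[
r(a+b)=ra+rb,\quad (r+s)a=ra+sa,\quad (rs)a=r(sa),\quad \bar{1}a=a.
\]
Each of these is an identity of functions on $A$. Evaluating at $t$ reduces it to the corresponding vector-space axiom in $X$ for the scalars $r(t),s(t)\in\K$. The additive-group structure is the one already obtained in the first part.

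\emph{The particular cases.} Since $e_t\in\KA$, closure under the action gives $e_ta\in\SE(F)$ at once. For $\alpha\in F(t)$, the map $\iota_t(\alpha)$ takes the value $\alpha\in F(t)$ at $t$ and $0\in F(s)$ at every $s\ne t$, so $\iota_t(\alpha)\in\SE(F)$. I will also record the identity $e_ta=\iota_t(a(t))$, which follows by evaluating both sides at $s=t$ and at $s\ne t$.

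There is no genuine obstacle here. The only point that needs care is that the nonemptiness assumption on the sections is not what gives $\theta\in\SE(F)$; it is the subspace property $0\in F(t)$ that is used.
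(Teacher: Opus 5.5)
Your proposal is correct and follows the paper's proof: both check that $\SE(F)$ is closed coordinatewise under addition, under $\K$-scalars and under the $\KA$-action, then use $e_ta=\iota_t(a(t))$ and $\iota_t(\alpha)\in\SE(F)$. You also spell out $\theta\in\SE(F)$ and the module axioms, which the paper leaves implicit.
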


\begin{proof}
Let $a,b\in\SE(F)$, $k\in\K$, and $r\in\K^A$. For every $t\in A$,
\[
 (a+b)(t)=a(t)+b(t)\in F(t),\qquad
 (ka)(t)=k\,a(t)\in F(t),
\]
because $F(t)$ is a linear subspace. Thus $\SE(F)$ is a linear subspace of
$X^A$. Also,
\[
       (ra)(t)=r(t)a(t)\in F(t),
\]
so $\SE(F)$ is a $\K^A$-module. Finally,
$e_ta=\iota_t(a(t))\in\SE(F)$, and
$\iota_t(\alpha)\in\SE(F)$ for every $\alpha\in F(t)$.
\end{proof}

We now identify the subsets of $X^A$ that are selection spaces.

\begin{definition}[Patchwork]
Let $W\subseteq X^{A}$ be nonempty. For an $A$-indexed family
$(a_t)_{t\in A}\subseteq W$, its \emph{patch} is the map $p\in X^{A}$ defined by
$p(t)=a_t(t)$. The set $W$ is \emph{patchwork-closed} if every such patch belongs
to $W$.
\end{definition}

\begin{theorem}\label{thm:3.3}
Let $W\subseteq X^{A}$ be nonempty. The following are equivalent.
\begin{enumerate}[label=\textup{(\roman*)}]
\item $W=\SE(F)$ for a soft set $F$ with nonempty sections.
\item $W$ is patchwork-closed.
\end{enumerate}
When these conditions hold, $F$ is unique and
\[
       F(t)=\pi_t(W):=\{a(t):a\in W\} \qquad(t\in A).
\]
If, in addition, $W$ is a linear subspace of $X^{A}$, then $F$ is a soft linear
subspace.
\end{theorem}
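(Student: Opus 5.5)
The plan is to prove the two implications directly from the definitions, then read off uniqueness and the linear-subspace clause from the formula $F(t)=\pi_t(W)$.

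\emph{(i)$\Rightarrow$(ii).} Suppose $W=\SE(F)$ with every $F(t)\ne\varnothing$. Take a family $(a_t)_{t\in A}\subseteq W$ and form its patch $p$. For each $t$ we have $p(t)=a_t(t)\in F(t)$, because $a_t\in\SE(F)$. Hence $p\in\SE(F)=W$.

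\emph{(ii)$\Rightarrow$(i).} Set $F(t):=\pi_t(W)$. These sections are nonempty because $W\ne\varnothing$.
\begin{itemize}
\item The inclusion $W\subseteq\SE(F)$ is immediate from the definition of $F$.
\item For the reverse inclusion, let $p\in\SE(F)$. For each $t\in A$, since $p(t)\in\pi_t(W)$, choose $a_t\in W$ with $a_t(t)=p(t)$. This uses the axiom of choice when $A$ is infinite.
\item The patch of $(a_t)_{t\in A}$ is exactly $p$, so patchwork-closedness gives $p\in W$.
\end{itemize}

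\emph{Uniqueness and the formula.} Suppose $W=\SE(G)$ for some soft set $G$ with nonempty sections. Fix $t$.
\begin{itemize}
\item Clearly $\pi_t(W)\subseteq G(t)$.
\item Conversely, given $\alpha\in G(t)$, pick any $b\in\SE(G)$, which exists because all sections are nonempty (again by choice). Modify $b$ at coordinate $t$ to take the value $\alpha$. The modified map still lies in $\SE(G)=W$, and its $t$-th coordinate is $\alpha$, so $\alpha\in\pi_t(W)$.
\end{itemize}
Thus $G(t)=\pi_t(W)$ for every $t$. This is the only place where nonemptiness of all sections is essential. Without it, $\SE(G)$ would be empty and $G$ would not be determined.

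\emph{Linear subspace clause.} If $W$ is a linear subspace of $X^A$, then each $\pi_t(W)$ is the image of $W$ under the linear evaluation map $\pi_t\colon X^A\to X$. The image of a linear subspace under a linear map is a linear subspace, so $F$ is a soft linear subspace.

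\emph{Main obstacle.} There is no serious one. The only delicate points are the two uses of choice and the explicit reliance on nonempty sections in the uniqueness argument.
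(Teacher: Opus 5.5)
Your proof is correct and follows the paper's argument essentially step for step. You use the same patching argument for the reverse inclusion $\SE(F)\subseteq W$, the same single-coordinate modification for uniqueness, and the same image-of-a-linear-map observation for the last clause. One small simplification: in the uniqueness step you can take $b\in W$ directly because $W\ne\varnothing$, so no choice is needed there. For the same reason, nonemptiness of the sections of $G$ is automatic from $\SE(G)=W\ne\varnothing$ rather than an extra hypothesis.
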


\begin{proof}
Suppose first that $W=\SE(F)$. If $(a_t)_{t\in A}\subseteq W$ and
$p(t)=a_t(t)$, then $p(t)\in F(t)$ for every $t$, so $p\in\SE(F)=W$.

Conversely, assume that $W$ is patchwork-closed and define
$F(t)=\pi_t(W)$. Then $W\subseteq\SE(F)$. Let $c\in\SE(F)$. For each $t$, choose
$a_t\in W$ with $a_t(t)=c(t)$. The patch of $(a_t)_{t\in A}$ is $c$, so
$c\in W$. Hence $W=\SE(F)$.

For uniqueness, suppose that $W=\SE(G)$ and every $G(t)$ is nonempty. Fix
$t\in A$. Since every member of $W$ has its $t$-th coordinate in $G(t)$,
$\pi_t(W)\subseteq G(t)$. Conversely, choose $b\in W$ and $\gamma\in G(t)$.
Replace $b(t)$ by $\gamma$ and leave the other coordinates unchanged. The
resulting selection belongs to $\SE(G)=W$, so $\gamma\in\pi_t(W)$. Therefore
$G(t)=\pi_t(W)$.

If $W$ is a linear subspace, then each projection $\pi_t(W)$ is also a
linear subspace of $X$.
\end{proof}

\begin{remark}
Ray and Goldar \cite{RayGoldar2017} represent a soft set by its soft
elements. Theorem~\ref{thm:3.3} gives the converse. When $A$ is
infinite, the proof uses the axiom of choice to select $(a_t)_{t\in A}$.
\end{remark}

\begin{example}
Let $X=\R$.
\begin{enumerate}[label=\textup{(\alph*)}]
\item For $A=\{1,2\}$, the diagonal
$\Delta=\{(x,x):x\in\R\}$ is a linear subspace of $\R^{2}$ but is not
patchwork-closed. Hence $\Delta\ne\SE(F)$ for every soft set $F$.
\item For $A=\N$, the space $c_0\subseteq\R^{\N}$ is closed under every binary
coordinatewise mixture $\chi_Ba+\chi_{\N\setminus B}b$, but it is not
patchwork-closed. Indeed, the vectors $u_t=\iota_t(1)$ belong to $c_0$, whereas
their patch is the constant sequence $1$, which does not belong to $c_0$.
\end{enumerate}
\end{example}

\section{Soft norms and sectionwise decomposition}

\begin{definition}[Soft norm]
Let $F$ be a soft linear subspace of $X$. A \emph{soft norm} on $F$ is a map
$\snorm\colon\SE(F)\to\RA$ satisfying, for all $a,b\in\SE(F)$ and
$r\in\KA$,
\begin{enumerate}[label=\textup{(SN\arabic*)}]
\item $\snorm(a)\sge\zbar$;
\item $\snorm(a)=\zbar$ if and only if $a=\theta$;
\item $\snorm(ra)=|r|\,\snorm(a)$;
\item $\snorm(a+b)\sle\snorm(a)+\snorm(b)$.
\end{enumerate}
The pair $(F,\snorm)$ is called a \emph{soft normed linear space via soft
elements}.
\end{definition}

\begin{example}[Sectionwise soft norms]\label{ex:4.2}
For each $t\in A$, let $\lVert\cdot\rVert_t$ be a norm on $F(t)$. Then
\[
        \snorm(a)(t)=\lVert a(t)\rVert_t
        \qquad(a\in\SE(F),\ t\in A)
\]
defines a soft norm. In particular, a norm on the ambient space $X$ induces a
soft norm by restriction to the sections.
\end{example}

\begin{example}[Constant homogeneity is insufficient]\label{ex:4.3}
Let $A=\{1,2\}$ and $F(1)=F(2)=\R$. Define
\[
 M(a)=\bigl(|a(1)|+|a(2)|,\ |a(2)|\bigr),\qquad a\in\R^2.
\]
The map $M$ is nonnegative, definite, subadditive, and homogeneous for
every constant scalar. It is not homogeneous for all soft scalars: with $r=e_1$,
\[
 M(ra)=\bigl(|a(1)|,0\bigr),\qquad
 |r|M(a)=\bigl(|a(1)|+|a(2)|,0\bigr).
\]
Thus constant homogeneity does not force a coordinatewise norm.
\end{example}

\begin{theorem}\label{thm:4.4}
Let $(F,\snorm)$ be a soft normed linear space. There is a unique family of norms
$\{\lVert\cdot\rVert_t:t\in A\}$, with $\lVert\cdot\rVert_t$ defined on $F(t)$,
such that
\[
       \snorm(a)(t)=\lVert a(t)\rVert_t
       \qquad(a\in\SE(F),\ t\in A).
\]
\end{theorem}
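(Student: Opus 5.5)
The plan is to define each section norm through the embedding $\iota_t$ and then show that $\snorm(a)(t)$ depends only on $a(t)$. For $t\in A$ and $\alpha\in F(t)$ set
\[
  \lVert\alpha\rVert_t:=\snorm(\iota_t(\alpha))(t).
\]
By the Proposition at the start of Section~3, $\iota_t(\alpha)\in\SE(F)$, so this is well defined. The key step is the identity $\snorm(a)(t)=\lVert a(t)\rVert_t$ for all $a\in\SE(F)$. Using $e_ta=\iota_t(a(t))$ and (SN3) with the soft scalar $r=e_t$, for which $|e_t|=e_t$, we get
\[
  \snorm(\iota_t(a(t)))=\snorm(e_ta)=e_t\,\snorm(a).
\]
Evaluating at $t$ gives $\lVert a(t)\rVert_t=\snorm(a)(t)$. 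This is the sectionwise identity $N(e_ta)=e_tN(a)$ announced in the introduction. Evaluating at $s\ne t$ also shows that $\snorm(\iota_t(\alpha))$ vanishes off $t$, although we do not need this.

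Next we check that each $\lVert\cdot\rVert_t$ is a norm on $F(t)$, by transferring the axioms through $\iota_t$.
\begin{itemize}
\item Nonnegativity follows from (SN1).
\item For homogeneity, take $k\in\K$. Then $\iota_t(k\alpha)=\bar{k}\,\iota_t(\alpha)$, and (SN3) gives $\lVert k\alpha\rVert_t=|k|\,\lVert\alpha\rVert_t$.
\item For the triangle inequality, $\iota_t(\alpha+\beta)=\iota_t(\alpha)+\iota_t(\beta)$. Applying (SN4) and evaluating at $t$ gives the inequality.
\end{itemize}
Definiteness is the only step needing care, because (SN2) is a statement about the whole soft real $\snorm(a)$, not about one coordinate. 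Suppose $\lVert\alpha\rVert_t=0$. By the key identity, $\snorm(\iota_t(\alpha))(t)=0$. Since $\snorm(\iota_t(\alpha))=e_t\snorm(\iota_t(\alpha))$, this soft real also vanishes at every $s\ne t$. Hence $\snorm(\iota_t(\alpha))=\zbar$, so (SN2) gives $\iota_t(\alpha)=\theta$, and therefore $\alpha=0$. I expect this vanishing-off-$t$ argument to be the main point; everything else is routine evaluation.

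For uniqueness, suppose $\{\lVert\cdot\rVert'_t\}$ is another family with the same property. Given $\alpha\in F(t)$, apply the property to $a=\iota_t(\alpha)$, which lies in $\SE(F)$:
\[
  \lVert\alpha\rVert'_t=\snorm(\iota_t(\alpha))(t)=\lVert\alpha\rVert_t.
\]
The same works with any $a\in\SE(F)$ satisfying $a(t)=\alpha$. Hence the family of section norms is unique.
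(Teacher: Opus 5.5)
Your proposal is correct and follows essentially the same route as the paper: you define $\lVert\alpha\rVert_t:=\snorm(\iota_t(\alpha))(t)$, obtain the sectionwise identity from (SN3) with $r=e_t$, transfer the norm axioms through $\iota_t$ (using the same vanishing-off-$t$ argument for definiteness), and read off uniqueness from the supported selections $\iota_t(\alpha)$. The only cosmetic difference is ordering: the paper first shows that $\snorm(a)(t)=\snorm(b)(t)$ whenever $a(t)=b(t)$ and then defines the norm, whereas you derive the identity directly from $e_ta=\iota_t(a(t))$.
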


\begin{proof}
Fix $t\in A$. If $a(t)=b(t)$, then $e_ta=e_tb$. By (SN3),
\[
 e_t\snorm(a)=\snorm(e_ta)=\snorm(e_tb)=e_t\snorm(b),
\]
and evaluation at $t$ gives $\snorm(a)(t)=\snorm(b)(t)$. Hence
\[
  \lVert\alpha\rVert_t:=\snorm\bigl(\iota_t(\alpha)\bigr)(t),
  \qquad \alpha\in F(t),
\]
is well defined and satisfies $\snorm(a)(t)=\lVert a(t)\rVert_t$.

Axiom (SN1) gives nonnegativity. Suppose $\lVert\alpha\rVert_t=0$ and put
$a=\iota_t(\alpha)$. Since $a=e_ta$, (SN3) gives
$\snorm(a)=e_t\snorm(a)$. Its $t$-th coordinate is zero by assumption and all
other coordinates are zero because of the factor $e_t$. Thus
$\snorm(a)=\zbar$, so $a=\theta$ by (SN2), and $\alpha=0$.
Applying (SN3) to the constant soft scalar $\bar{k}$ gives homogeneity,
and evaluating (SN4) at $t$ gives the triangle inequality. Therefore
$\lVert\cdot\rVert_t$ is a norm. Uniqueness follows from
$\lVert\alpha\rVert_t=\snorm(\iota_t(\alpha))(t)$.
\end{proof}

\begin{corollary}
For every $t\in A$ and $\alpha\in F(t)$, the set
\[
 \{\snorm(a)(t):a\in\SE(F),\ a(t)=\alpha\}
\]
is a singleton.
\end{corollary}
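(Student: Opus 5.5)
The plan is to read the corollary directly off Theorem~\ref{thm:4.4}. That theorem gives a family of norms $\lVert\cdot\rVert_t$ on $F(t)$ with $\snorm(a)(t)=\lVert a(t)\rVert_t$ for every $a\in\SE(F)$. So for any $a$ in the displayed set, the condition $a(t)=\alpha$ forces $\snorm(a)(t)=\lVert\alpha\rVert_t$. Hence the set is contained in $\{\lVert\alpha\rVert_t\}$.

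It remains to check that the set is nonempty, so that it is exactly this singleton rather than empty. Here I would use the Proposition of Section~3: $\iota_t(\alpha)\in\SE(F)$ for $\alpha\in F(t)$. Since $\iota_t(\alpha)(t)=\alpha$, the element $\iota_t(\alpha)$ belongs to the indexing set, and its value is $\snorm(\iota_t(\alpha))(t)=\lVert\alpha\rVert_t$.

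Alternatively, one can avoid the full theorem and use only its first step. If $a(t)=b(t)=\alpha$, then $e_ta=e_tb$, so (SN3) gives $e_t\snorm(a)=e_t\snorm(b)$, and evaluating at $t$ gives equality. There is no real obstacle; the only point needing care is the nonemptiness, which is supplied by $\iota_t(\alpha)$.
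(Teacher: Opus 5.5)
Your proof is correct and matches the paper's, which likewise reads the set off Theorem~\ref{thm:4.4} as $\{\lVert\alpha\rVert_t\}$. Your explicit nonemptiness check via $\iota_t(\alpha)\in\SE(F)$ is a point the paper leaves implicit, and your alternative argument is just the first step of the proof of Theorem~\ref{thm:4.4}.
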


\begin{proof}
By Theorem~\ref{thm:4.4}, the displayed set is
$\{\lVert\alpha\rVert_t\}$. This is condition (N5) in
\cite[Theorem~5.5]{DMS}, so (N5) follows from (SN1)--(SN4).
\end{proof}

\begin{remark}
Theorem~\ref{thm:4.4} and
Example~\ref{ex:4.2} give a one-to-one correspondence between soft
norms on $F$ and families of norms on the sections $F(t)$.
\end{remark}

From now on, $\lVert\cdot\rVert_t$ always denotes the norm determined by
Theorem~\ref{thm:4.4}.

\section{Bounded selections and the extended supremum metric}

Let
\[
       A_0:=\{t\in A:F(t)\ne\{0\}\}
\]
be the set of active parameters.

\begin{definition}
The space of \emph{bounded soft elements} is
\[
 \SE_{\bnd}(F):=
 \left\{a\in\SE(F):\lVert a\rVert_\infty:=
       \sup_{t\in A}\lVert a(t)\rVert_t<\infty\right\}.
\]
For arbitrary $a,b\in\SE(F)$, define
\[
       d_\infty(a,b):=\sup_{t\in A}\lVert a(t)-b(t)\rVert_t
       \in[0,\infty].
\]
\end{definition}

\begin{proposition}
Let $(F,\snorm)$ be a soft normed linear space.
\begin{enumerate}[label=\textup{(\alph*)}]
\item $\SE_{\bnd}(F)$ is a linear subspace of $\SE(F)$,
      $\lVert\cdot\rVert_\infty$ is a norm on it, and bounded soft scalars act on
      it.
\item $d_\infty$ is an extended metric on $\SE(F)$. Its finite-distance
      components are the affine sets $a+\SE_{\bnd}(F)$, each isometric to
      $(\SE_{\bnd}(F),\lVert\cdot\rVert_\infty)$.
\item If $A_0$ is finite, then $\SE_{\bnd}(F)=\SE(F)$. If $F(t)=X$ for every $t$
      and the same norm is used on all sections, then
      $\SE_{\bnd}(F)=\ell^\infty(A,X)$.
\end{enumerate}
\end{proposition}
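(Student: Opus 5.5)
The plan is to reduce everything to the sectionwise description of Theorem~\ref{thm:4.4}: $\snorm(a)(t)=\lVert a(t)\rVert_t$ with each $\lVert\cdot\rVert_t$ a norm on $F(t)$. Each claim then comes from a pointwise inequality in a fixed section, followed by a supremum over $t\in A$.

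For (a), take $a,b\in\SE_{\bnd}(F)$ and $k\in\K$. By the triangle inequality in each section,
\[
\lVert a(t)+b(t)\rVert_t\le\lVert a\rVert_\infty+\lVert b\rVert_\infty
\quad\text{and}\quad
\lVert ka(t)\rVert_t=|k|\,\lVert a(t)\rVert_t .
\]
Hence $\SE_{\bnd}(F)$ is a linear subspace, which is nonempty since $\theta$ lies in it. Taking suprema gives subadditivity and homogeneity of $\lVert\cdot\rVert_\infty$. For definiteness, $\lVert a\rVert_\infty=0$ forces $\lVert a(t)\rVert_t=0$ for every $t$, so $a(t)=0$ for every $t$ and $a=\theta$. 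For the module claim, let $r\in\KA$ with $\sup_t|r(t)|<\infty$. Then
\[
\lVert r(t)a(t)\rVert_t=|r(t)|\,\lVert a(t)\rVert_t\le\Bigl(\sup_s|r(s)|\Bigr)\lVert a\rVert_\infty ,
\]
so $ra\in\SE_{\bnd}(F)$, with $ra\in\SE(F)$ by the Proposition of Section~3.

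For (b), symmetry and $d_\infty(a,a)=0$ are immediate. Definiteness follows as in (a). The triangle inequality comes from
\[
\lVert a(t)-c(t)\rVert_t\le\lVert a(t)-b(t)\rVert_t+\lVert b(t)-c(t)\rVert_t\le d_\infty(a,b)+d_\infty(b,c),
\]
read in $[0,\infty]$, and then a supremum over $t$. The key identity is $d_\infty(a,b)=\lVert a-b\rVert_\infty$, which holds in $[0,\infty]$ with the sup extended to all of $\SE(F)$. Consequently $d_\infty(a,b)<\infty$ if and only if $a-b\in\SE_{\bnd}(F)$. This relation is the coset equivalence relation of the subspace $\SE_{\bnd}(F)$, so the classes of ``finite distance'' are exactly the affine sets $a+\SE_{\bnd}(F)$.

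To finish (b), the map $u\mapsto a+u$ from $\SE_{\bnd}(F)$ onto $a+\SE_{\bnd}(F)$ satisfies $d_\infty(a+u,a+v)=\lVert u-v\rVert_\infty$, so it is an isometry. It remains to justify calling these sets components. I would note that distinct classes lie at distance $\infty$, so each class is open: the $d_\infty$-ball of radius $1$ about any point stays inside its class. Each class is also connected, being an isometric copy of a normed space and hence path-connected via $s\mapsto a+su$. Together these show the classes are exactly the connected components. This interpretive point is the step I expect to be the main obstacle, since the statement does not pin down the meaning of ``finite-distance components''. The coset description is the essential content, and the topological reading follows as just described.

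For (c), if $t\notin A_0$ then $F(t)=\{0\}$, so $\lVert a(t)\rVert_t=0$. The supremum therefore reduces to a maximum over the finite set $A_0$, or is $0$ when $A_0=\varnothing$, and every $a\in\SE(F)$ is bounded. In the absolute case with one common norm $\lVert\cdot\rVert$ on $X$, we have $\SE(F)=X^A$ and $\lVert a\rVert_\infty=\sup_t\lVert a(t)\rVert$. This is precisely the defining condition and norm of $\ell^\infty(A,X)$.
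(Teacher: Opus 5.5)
Your proposal is correct and follows essentially the same route as the paper: sectionwise estimates from Theorem~\ref{thm:4.4} followed by suprema over $t$, then the criterion $d_\infty(a,b)<\infty \iff a-b\in\SE_{\bnd}(F)$ with translation as the isometry, and the finite-maximum observation for (c). The one addition is your explicit argument that the cosets $a+\SE_{\bnd}(F)$ are open and path-connected and therefore are exactly the connected components; the paper leaves this step implicit, and your argument is sound.
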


\begin{proof}
If $a,b\in\SE_{\bnd}(F)$ and $k\in\K$, then
\[
 \lVert a+b\rVert_\infty
 \le \lVert a\rVert_\infty+\lVert b\rVert_\infty,
 \qquad
 \lVert ka\rVert_\infty=|k|\lVert a\rVert_\infty.
\]
Hence $\SE_{\bnd}(F)$ is a linear subspace. If $r\in\K^A$ is bounded, then
\[
  \lVert ra\rVert_\infty
  \le \left(\sup_{t\in A}|r(t)|\right)\lVert a\rVert_\infty,
\]
so bounded soft scalars act on this space. The norm is definite because
$\lVert a\rVert_\infty=0$ implies $\snorm(a)=\zbar$ and hence $a=\theta$.

Symmetry and separation for $d_\infty$ follow coordinatewise. For the triangle
inequality,
\[
\begin{aligned}
 d_\infty(a,c)
 &=\sup_{t\in A}\lVert a(t)-c(t)\rVert_t\\
 &\le \sup_{t\in A}
    \bigl(\lVert a(t)-b(t)\rVert_t+\lVert b(t)-c(t)\rVert_t\bigr)\\
 &\le d_\infty(a,b)+d_\infty(b,c).
\end{aligned}
\]
Thus $d_\infty$ is an extended metric. Also,
$d_\infty(a,b)<\infty$ exactly when $b-a\in\SE_{\bnd}(F)$. Its
finite-distance component through $a$ is therefore
$a+\SE_{\bnd}(F)$, and translation by $-a$ is an isometry onto
$\SE_{\bnd}(F)$.

If $A_0$ is finite, only finitely many coordinate norms can be nonzero, so every
selection is bounded. If all sections equal $X$ and carry the same norm, the
definition of $\SE_{\bnd}(F)$ is exactly the definition of
$\ell^\infty(A,X)$.
\end{proof}

\begin{remark}
For infinite $A_0$, boundedness is generally not a sectionwise condition, so
$\SE_{\bnd}(F)$ need not be the selection space of a soft set. For example, let
$A=\N$, $F(t)=\R$, and $a_t=\iota_t(t)$. Every $a_t$ is bounded, but their patch
$p(t)=t$ is not. By Theorem~\ref{thm:3.3},
$\ell^\infty(\N)$ is not of the form $\SE(H)$ for a soft set $H$ over $\R$.
\end{remark}

\begin{theorem}\label{thm:5.4}
The normed space $(\SE_{\bnd}(F),\lVert\cdot\rVert_\infty)$ is Banach if and only
if every section $(F(t),\lVert\cdot\rVert_t)$ is Banach.
\end{theorem}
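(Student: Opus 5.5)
The proof has two directions, and both reduce to the embeddings $\iota_t$ and to the fact that the coordinate maps are contractions.

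For the necessity direction, assume $(\SE_{\bnd}(F),\lVert\cdot\rVert_\infty)$ is Banach and fix $t\in A$. For $\alpha\in F(t)$ we have $\iota_t(\alpha)\in\SE_{\bnd}(F)$. By Theorem~\ref{thm:4.4} the zero vector has norm zero in every section, so
\[
\lVert\iota_t(\alpha)\rVert_\infty=\lVert\alpha\rVert_t .
\]
Thus $\iota_t$ is a linear isometry of $F(t)$ onto the subspace
\[
\iota_t(F(t))=\{a\in\SE_{\bnd}(F):a(s)=0\ \text{for } s\ne t\}.
\]
This subspace is closed, since it is the intersection of the kernels of the evaluation maps $a\mapsto a(s)$ for $s\ne t$. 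Each of these maps is continuous because $\lVert a(s)\rVert_s\le\lVert a\rVert_\infty$. A closed subspace of a Banach space is complete, so $F(t)$ is complete. If $F(t)=\{0\}$ there is nothing to prove.

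For the sufficiency direction, assume every section is Banach and let $(a_n)$ be Cauchy in $\lVert\cdot\rVert_\infty$. For each $t$, the inequality $\lVert a_n(t)-a_m(t)\rVert_t\le\lVert a_n-a_m\rVert_\infty$ shows that $(a_n(t))$ is Cauchy in $F(t)$. Let $a(t)$ be its limit; since $F(t)$ is complete, $a(t)\in F(t)$, so $a\in\SE(F)$. Here we use that the target space is the full product $\SE(F)$ (Theorem~\ref{thm:3.3}): no further membership check is needed.

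The main step, which is the standard $\ell^\infty$ argument, is to show that the convergence is uniform in $t$. Given $\varepsilon>0$, pick $N$ with $\lVert a_n-a_m\rVert_\infty<\varepsilon$ for $n,m\ge N$. Then $\lVert a_n(t)-a_m(t)\rVert_t<\varepsilon$ for every $t$. Letting $m\to\infty$ and using continuity of the section norm gives
\[
\lVert a_n(t)-a(t)\rVert_t\le\varepsilon \quad\text{for all } t\in A \text{ and } n\ge N .
\]
Taking the supremum over $t$ yields $d_\infty(a_n,a)\le\varepsilon$. In particular $a=a_N-(a_N-a)$ is bounded, because
\[
\lVert a\rVert_\infty\le\lVert a_N\rVert_\infty+\varepsilon ,
\]
so $a\in\SE_{\bnd}(F)$. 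Then $\lVert a_n-a\rVert_\infty\le\varepsilon$ for $n\ge N$, so $a_n\to a$ and the space is Banach.

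The only delicate point is to pass to the limit in $m$ before taking the supremum over $t$, so that the bound is uniform in $t$.
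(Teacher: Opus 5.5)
Your proof is correct and follows essentially the same route as the paper. For sufficiency you take sectionwise limits and let $m\to\infty$ before taking the supremum over $t$; for necessity you use the isometric embedding $\iota_t$ onto a closed subspace. The differences are cosmetic: you verify closedness of $\iota_t(F(t))$ as an intersection of kernels of the continuous evaluations $a\mapsto a(s)$, $s\ne t$, where the paper checks it directly with sequences, and your appeal to Theorem~\ref{thm:3.3} is unnecessary since $\SE(F)$ is the full product by definition.
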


\begin{proof}
Assume first that every section is Banach and let $(a_n)$ be Cauchy in the
supremum norm. For each $t$, the sequence $(a_n(t))$ is Cauchy in $F(t)$; write
$a_n(t)\to a(t)\in F(t)$. Given $\varepsilon>0$, choose $n_0$ such that
\[
 \sup_t\lVert a_m(t)-a_n(t)\rVert_t<\varepsilon/2
 \qquad(m,n\ge n_0).
\]
Fix $n\ge n_0$ and let $m\to\infty$ in each section. Then
$\lVert a(t)-a_n(t)\rVert_t\le\varepsilon/2$ for all $t$. Hence
$a-a_n\in\SE_{\bnd}(F)$. Since $a_n\in\SE_{\bnd}(F)$,
\[
        a=a_n+(a-a_n)\in\SE_{\bnd}(F),
\]
and $\lVert a-a_n\rVert_\infty\le\varepsilon/2<\varepsilon$.

Conversely, fix $t\in A$. The map
$\iota_t\colon F(t)\to\SE_{\bnd}(F)$ is linear and isometric. Its image is closed:
if $\iota_t(\alpha_n)\to a$ in the supremum norm, then $a(s)=0$ for $s\ne t$ and
$a=\iota_t(a(t))$. Thus $F(t)$ is isometric to a closed subspace of the Banach
space $\SE_{\bnd}(F)$ and is complete.
\end{proof}

\section{Convergence and the box, uniform, and product topologies}

\begin{definition}\label{def:6.1}
Let $(a_n)$ be a sequence in $\SE(F)$ and let $a\in\SE(F)$. We say that
$a_n\to a$
\begin{enumerate}[label=\textup{(\roman*)}]
\item \emph{componentwise} if
      $\lVert a_n(t)-a(t)\rVert_t\to0$ for every $t\in A$;
\item \emph{uniformly} if $d_\infty(a_n,a)\to0$;
\item \emph{softly} if for every strictly positive soft real $\varepsilon$ there
      is $n_0$ such that
      \[
       \snorm(a_n-a)\slt\varepsilon \qquad(n\ge n_0).
      \]
\end{enumerate}
\end{definition}

The condition $d_\infty(a_n,a)\to0$ allows $d_\infty(a_n,a)$ to be infinite
for finitely many $n$.

\begin{theorem}
For sequences in $\SE(F)$,
\[
  \text{soft convergence}\ \Longrightarrow\
  \text{uniform convergence}\ \Longrightarrow\
  \text{componentwise convergence}.
\]
The three modes agree for every sequence if and only if $A_0$ is finite. If
$A_0$ is infinite, both implications are strict.
\end{theorem}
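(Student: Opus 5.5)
The plan is to prove the two implications directly from the sectionwise description of $\snorm$ in Theorem~\ref{thm:4.4}, then prove the converses when $A_0$ is finite, and finally build counterexamples on a countable subset of $A_0$ when $A_0$ is infinite. By Theorem~\ref{thm:4.4}, $\snorm(a_n-a)(t)=\lVert a_n(t)-a(t)\rVert_t$, so every mode in Definition~\ref{def:6.1} is a statement about the scalars $\lVert a_n(t)-a(t)\rVert_t$.

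\emph{Implications.} For soft $\Rightarrow$ uniform, fix $\delta>0$ and apply soft convergence to the constant strictly positive soft real $\bar{\delta}$. This gives $\lVert a_n(t)-a(t)\rVert_t<\delta$ for all $t$ and all $n\ge n_0$, hence $d_\infty(a_n,a)\le\delta$ for $n\ge n_0$. Uniform $\Rightarrow$ componentwise is immediate, since $\lVert a_n(t)-a(t)\rVert_t\le d_\infty(a_n,a)$ for every $t$.

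\emph{Finite $A_0$.} It suffices to show componentwise $\Rightarrow$ soft. Let $\varepsilon$ be strictly positive. For $t\notin A_0$ we have $F(t)=\{0\}$, so $\lVert a_n(t)-a(t)\rVert_t=0<\varepsilon(t)$ for every $n$. For each of the finitely many $t\in A_0$, choose $n_t$ with $\lVert a_n(t)-a(t)\rVert_t<\varepsilon(t)$ for all $n\ge n_t$. Then $n_0=\max_{t\in A_0}n_t$ works, with $n_0=1$ if $A_0=\varnothing$. So all three modes agree.

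\emph{Infinite $A_0$.} This is the main step, and it also gives the ``only if'' direction of the equivalence. Choose distinct $t_1,t_2,\ldots\in A_0$ and vectors $u_k\in F(t_k)$ with $\lVert u_k\rVert_{t_k}=1$; these exist because $F(t_k)\ne\{0\}$.
\begin{itemize}
\item \emph{Componentwise but not uniform.} Put $a_n=\iota_{t_n}(u_n)$. For each fixed $t$, $a_n(t)\neq0$ for at most one $n$, so $a_n\to\theta$ componentwise. However $d_\infty(a_n,\theta)=1$ for every $n$.
\item \emph{Uniform but not soft.} Define $a_n(t_k)=u_k/n$ for all $k$, and $a_n(t)=0$ for $t\notin\{t_k\}$. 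This lies in $\SE(F)$, and $d_\infty(a_n,\theta)=1/n\to0$. Now take $\varepsilon(t_k)=1/k$ and $\varepsilon(t)=1$ elsewhere; $\varepsilon$ is strictly positive. For every $n$, at $t=t_n$ we get $\snorm(a_n)(t_n)=1/n\not<\varepsilon(t_n)$. So no $n_0$ exists and soft convergence fails.
\end{itemize}
Both implications are therefore strict, and the modes do not agree. The only delicate point is choosing $\varepsilon$ so that it decays along the sequence as fast as the uniform bound $1/n$, so that the single coordinate $t_n$ defeats the $n$-th term.
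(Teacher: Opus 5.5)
Your proposal is correct and follows the paper's proof essentially step for step. It uses a constant soft real for soft $\Rightarrow$ uniform, the maximum of finitely many indices when $A_0$ is finite, and the same two counterexamples for infinite $A_0$ (the moving bumps $\iota_{t_n}(u_n)$ and the sequence $u_k/n$ on $\{t_k\}$); the only cosmetic differences are that you take $\varepsilon(t_k)=1/k$ and rely on the strictness of $\slt$ where the paper uses $1/(k+1)$, and that you treat $A_0=\varnothing$ explicitly.
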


\begin{proof}
If $a_n\to a$ softly, apply the definition to the constant soft real
$\overline{\delta/2}$, where $\delta>0$. Then eventually
$\lVert a_n(t)-a(t)\rVert_t<\delta/2$ for every $t$, so
$d_\infty(a_n,a)\le\delta/2<\delta$. Uniform convergence implies
componentwise convergence.

If $A_0$ is finite and $a_n\to a$ componentwise, let $\varepsilon$ be a strictly
positive soft real. For each $t\in A_0$, choose $n_t$ such that
$\lVert a_n(t)-a(t)\rVert_t<\varepsilon(t)$ for $n\ge n_t$. The maximum of the
finitely many $n_t$ works for all active coordinates; the remaining coordinates
are identically zero. Hence the convergence is soft.

Suppose now that $A_0$ is infinite. Choose distinct $t_1,t_2,\ldots\in A_0$ and
vectors $u_j\in F(t_j)$ with $\lVert u_j\rVert_{t_j}=1$. Define
\[
 x_n(t_j)=\frac1n u_j\quad(j\in\N),
 \qquad x_n(t)=0\quad(t\notin\{t_j:j\in\N\}).
\]
Then $d_\infty(x_n,\theta)=1/n$, so $x_n\to\theta$ uniformly. Let
$\varepsilon(t_j)=1/(j+1)$ and set $\varepsilon(t)=1$ elsewhere. For every $n$,
\[
 \lVert x_n(t_n)\rVert_{t_n}=\frac1n>\frac1{n+1}=\varepsilon(t_n),
\]
so the convergence is not soft.

Finally, let $y_n=\iota_{t_n}(u_n)$. Every fixed coordinate is eventually zero,
so $y_n\to\theta$ componentwise, but
$d_\infty(y_n,\theta)=1$ for every $n$. Thus componentwise convergence need not
be uniform.
\end{proof}

For $a\in\SE(F)$ and a strictly positive soft real $\varepsilon$, put
\[
 B_s(a,\varepsilon)
 :=\{b\in\SE(F):\snorm(b-a)\slt\varepsilon\}
 =\prod_{t\in A}B_t\bigl(a(t),\varepsilon(t)\bigr),
\]
where $B_t(\alpha,r)$ is the open ball in $F(t)$. Let $\taubox$ be the box
topology generated by the products $\prod_tU_t$, where each $U_t$ is open in
$F(t)$, and let $\tauprod$ be the product topology. For $r>0$, write
\[
 B_\infty(a,r)=\{b\in\SE(F):d_\infty(a,b)<r\}.
\]
The balls $B_\infty(a,r)$ generate the extended-metric topology $\tauunif$.
Equivalently, $\tauunif$ is the disjoint union of the metric topologies on the
components $a+\SE_{\bnd}(F)$.

\begin{proposition}
The soft balls $B_s(a,\varepsilon)$ form a basis for $\taubox$. A sequence
converges softly if and only if it converges in $\taubox$. Moreover, $\taubox$ is
Hausdorff.
\end{proposition}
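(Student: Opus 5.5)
The plan is to work with the product description of the soft balls already displayed before the statement. By Theorem~\ref{thm:4.4}, $\snorm(b-a)\slt\varepsilon$ means $\lVert b(t)-a(t)\rVert_t<\varepsilon(t)$ for every $t$. Hence
\[
 B_s(a,\varepsilon)=\prod_{t\in A}B_t\bigl(a(t),\varepsilon(t)\bigr).
\]
Since each $B_t(a(t),\varepsilon(t))$ is open in $F(t)$, every soft ball is a basic open box and so lies in $\taubox$.

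For the basis property, I will take $b\in W\in\taubox$ and find a box $\prod_tU_t$ with $b\in\prod_tU_t\subseteq W$. For each $t$, openness of $U_t$ in the normed space $F(t)$ lets me choose $\varepsilon(t)>0$ with $B_t(b(t),\varepsilon(t))\subseteq U_t$. This choice is made independently in each coordinate, using choice when $A$ is infinite, as in the remark after Theorem~\ref{thm:3.3}. The resulting $\varepsilon$ is strictly positive and $b\in B_s(b,\varepsilon)\subseteq\prod_tU_t\subseteq W$. Thus the soft balls form a basis for $\taubox$.

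For the convergence statement, I will first show that soft convergence implies $\taubox$ convergence. If $a_n\to a$ softly and $W$ is a $\taubox$-neighbourhood of $a$, the basis property gives $B_s(a,\varepsilon)\subseteq W$. Soft convergence puts $a_n$ in $B_s(a,\varepsilon)$ for $n\ge n_0$, hence in $W$. For the converse, each $B_s(a,\varepsilon)$ is a $\taubox$-open set containing $a$, so $\taubox$ convergence places $a_n$ in it eventually, which is exactly soft convergence.

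For the Hausdorff property, I will take $a\ne b$ and fix a coordinate $t_0$ with $a(t_0)\ne b(t_0)$. Set $\delta=\lVert a(t_0)-b(t_0)\rVert_{t_0}/2$, let $\varepsilon(t_0)=\delta$, and let $\varepsilon(t)=1$ elsewhere. The soft balls $B_s(a,\varepsilon)$ and $B_s(b,\varepsilon)$ are disjoint because their $t_0$-th factors are disjoint by the triangle inequality in $F(t_0)$. (Alternatively, $\taubox$ is finer than the Hausdorff product topology.)

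No step is a real obstacle. The only point needing care is the basis step: $\varepsilon$ must be chosen coordinatewise with no uniformity requirement, and this is exactly what soft reals allow but a single real radius would not.
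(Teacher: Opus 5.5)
Your proof is correct and follows the paper's argument. Soft balls are boxes of sectionwise open balls, and coordinatewise radii inside a given box give the basis property. Convergence is then read off from the definition, and Hausdorffness comes from separating at a single coordinate. The paper uses the full sections $F(s)$ away from $t_0$ where you use radius-$1$ balls, which makes no difference. As a minor aside, you do not need choice in the basis step, since $\varepsilon(t)=\sup\{r\in(0,1]:B_t(b(t),r)\subseteq U_t\}$ is itself an admissible radius.
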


\begin{proof}
Every soft ball is a box of sectionwise open balls. Conversely, if
$a\in\prod_tU_t$ and each $U_t$ is open, choose $\varepsilon(t)>0$ such that
$B_t(a(t),\varepsilon(t))\subseteq U_t$. Then
$B_s(a,\varepsilon)\subseteq\prod_tU_t$. Thus the soft balls form a basis for the
box topology. The two notions of sequential convergence now agree by
Definition~\ref{def:6.1}.

If $a\ne b$, choose $t$ with $a(t)\ne b(t)$ and take disjoint norm balls
around $a(t)$ and $b(t)$ in $F(t)$. Use the full section $F(s)$ at every
$s\ne t$. The resulting boxes are disjoint neighborhoods of $a$ and $b$.
\end{proof}

\begin{proposition}
One has
\[
          \tauprod\subseteq\tauunif\subseteq\taubox.
\]
If $A_0$ is finite, the three topologies coincide. If $A_0$ is infinite, both
inclusions are strict. In that case $\taubox$ is not first countable and hence is
not metrizable.
\end{proposition}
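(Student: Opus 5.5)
I will prove the two inclusions first, then treat the finite case, and finally the infinite case, which includes the failure of first countability.

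\emph{The inclusions.} For $\tauprod\subseteq\tauunif$, take a basic product neighbourhood $\prod_t U_t$ of a point $b$, where $U_t=F(t)$ for all $t$ outside a finite set $S$. For each $s\in S$ I choose $r_s>0$ with $B_s(b(s),r_s)\subseteq U_s$, and I set $r=\min_{s\in S} r_s$. Then $B_\infty(b,r)\subseteq\prod_tU_t$, because $d_\infty(b,c)<r$ forces $\lVert b(s)-c(s)\rVert_s<r_s$ for each $s\in S$. For $\tauunif\subseteq\taubox$, let $c\in B_\infty(a,r)$ and put $\rho=r-d_\infty(a,c)>0$. The soft ball $B_s(c,\overline{\rho/2})$ lies in $B_\infty(c,\rho)\subseteq B_\infty(a,r)$. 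Together with the preceding Proposition, which says the soft balls form a basis of $\taubox$, this shows every $\tauunif$-open set is box open.

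\emph{Finite $A_0$.} For $t\notin A_0$ the section $F(t)=\{0\}$ is a single point, so it contributes nothing to any of the three topologies. A box neighbourhood is then effectively restricted only on the finite set $A_0$, so it is already a product neighbourhood. Hence $\taubox\subseteq\tauprod$, and the three topologies coincide.

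\emph{Infinite $A_0$.} I use the distinct $t_j\in A_0$ and the unit vectors $u_j$ from the proof of the previous theorem.

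\begin{itemize}
\item \textbf{$\tauprod\ne\tauunif$.} The set $B_\infty(\theta,1)$ is not product open. Every product neighbourhood of $\theta$ leaves some $t_j$ unrestricted, so it contains $\iota_{t_j}(2u_j)$, and this point lies at $d_\infty$-distance $2$ from $\theta$.
\item \textbf{$\tauunif\ne\taubox$.} Take $\varepsilon(t_j)=1/j$ and $\varepsilon=1$ elsewhere. The soft ball $B_s(\theta,\varepsilon)$ contains no ball $B_\infty(\theta,r)$: choosing $j$ with $1/j<r/2$, the point $\iota_{t_j}((r/2)u_j)$ lies in $B_\infty(\theta,r)$ but not in $B_s(\theta,\varepsilon)$. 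Alternatively, one can argue with sequences: the sequence $x_n$ of the previous theorem converges uniformly but not softly, and soft convergence equals $\taubox$ convergence.
\end{itemize}

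\emph{Failure of first countability.} This is the main step, and I will use a diagonal argument. Suppose $(V_k)_{k\in\N}$ is a countable neighbourhood base of $\taubox$ at $\theta$. Each $V_k$ contains some soft ball $B_s(\theta,\varepsilon_k)$. I define $\varepsilon(t_j)=\varepsilon_j(t_j)/2$ and $\varepsilon(t)=1$ elsewhere. For each $k$, some $V_k$ would have to lie inside $B_s(\theta,\varepsilon)$. However, the point $\iota_{t_k}\bigl((\varepsilon_k(t_k)/2)u_k\bigr)$ has norm $\varepsilon_k(t_k)/2<\varepsilon_k(t_k)$ at $t_k$, so it belongs to $B_s(\theta,\varepsilon_k)\subseteq V_k$. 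Its norm at $t_k$ equals $\varepsilon(t_k)$, and since the soft ball requires the strict inequality $<\varepsilon(t_k)$, the point lies outside $B_s(\theta,\varepsilon)$. This contradicts $V_k\subseteq B_s(\theta,\varepsilon)$, so no countable base exists. Since every metrizable space is first countable, $\taubox$ is not metrizable.
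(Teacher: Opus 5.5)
Your proposal is correct and follows the paper's proof essentially step by step. The steps match: the minimum-radius argument for $\tauprod\subseteq\tauunif$, a half-gap box around each point of a uniform ball, the witness $\varepsilon(t_j)=1/j$ for strictness, and the same diagonal choice $\varepsilon(t_k)=\varepsilon_k(t_k)/2$ against a countable base. The only cosmetic difference is that you place the diagonal witness exactly at norm $\varepsilon(t_k)$ and rely on the strict inequality in $B_s$, whereas the paper uses norm $3\varepsilon^n(t_n)/4$.
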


\begin{proof}
Let $a\in\SE(F)$. A basic product neighborhood of $a$ restricts only
finitely many coordinates. In each restricted coordinate, choose a norm ball
contained in the prescribed open set. A positive radius smaller than all these
finitely many radii gives a $d_\infty$-ball contained in the product
neighborhood. Hence $\tauprod\subseteq\tauunif$. To prove $\tauunif\subseteq\taubox$, let $b\in B_\infty(a,r)$ and put
\[
       s=\frac{r-d_\infty(a,b)}{2}>0.
\]
If $c\in\prod_tB_t(b(t),s)$, then
$d_\infty(c,b)\le s$ and hence
\[
 d_\infty(c,a)\le s+d_\infty(b,a)<r.
\]
Thus every point of $B_\infty(a,r)$ has a box neighborhood contained in that
ball.

If $A_0$ is finite, singleton factors outside $A_0$ have no effect and the three
topologies reduce to the usual topology of a finite product.

Now assume that $A_0$ is infinite, and choose distinct active parameters
$t_1,t_2,\ldots$. The uniform ball $B_\infty(\theta,1)$ is not product-open at
$\theta$: any basic product neighborhood leaves some active coordinate
unrestricted, where a vector of norm at least $1$ can be inserted. Thus the first
inclusion is strict.

Choose a strictly positive soft real $\varepsilon$ with
$\varepsilon(t_j)=1/j$ and $\varepsilon(t)=1$ elsewhere. The box
$B_s(\theta,\varepsilon)$ contains no uniform ball about $\theta$. Indeed, for
$r>0$, choose $j$ with $1/j<r$ and insert at $t_j$ a vector with norm strictly
between $1/j$ and $r$. Hence the second inclusion is strict.

For first countability, suppose that $(U_n)$ is a countable neighborhood basis
at $\theta$. Choose a soft ball $B_s(\theta,\varepsilon^n)\subseteq U_n$ for each
$n$. Define $\varepsilon(t_n)=\varepsilon^n(t_n)/2$ and set
$\varepsilon(t)=1$ at the remaining parameters. For each $n$, choose
$x_n\in F(t_n)$ with
$\lVert x_n\rVert_{t_n}=3\varepsilon^n(t_n)/4$. Then
\[
 \iota_{t_n}(x_n)\in B_s(\theta,\varepsilon^n)
 \setminus B_s(\theta,\varepsilon).
\]
Therefore no $U_n$ is contained in $B_s(\theta,\varepsilon)$, contradicting the
basis property.
\end{proof}

The classical product, uniform, and box topologies are discussed in
\cite[Section~20]{Munkres}.

\section{Completeness}

\begin{definition}
A sequence $(a_n)$ in $\SE(F)$ is
\begin{enumerate}[label=\textup{(\roman*)}]
\item \emph{componentwise Cauchy} if $(a_n(t))$ is Cauchy in $F(t)$ for every
      $t$;
\item \emph{uniformly Cauchy} if for every $\varepsilon>0$ there is $n_0$ such
      that $d_\infty(a_m,a_n)<\varepsilon$ for $m,n\ge n_0$;
\item \emph{soft Cauchy} if for every strictly positive soft real $\varepsilon$
      there is $n_0$ such that
      $\snorm(a_m-a_n)\slt\varepsilon$ for $m,n\ge n_0$.
\end{enumerate}
\end{definition}

\begin{theorem}\label{thm:7.2}
For a soft normed linear space $(F,\snorm)$, the following are equivalent.
\begin{enumerate}[label=\textup{(\roman*)}]
\item Every section $F(t)$ is Banach.
\item $\SE_{\bnd}(F)$ is Banach in the supremum norm.
\item Every uniformly Cauchy sequence in $\SE(F)$ converges uniformly.
\item Every componentwise Cauchy sequence in $\SE(F)$ converges componentwise.
\item Every soft Cauchy sequence in $\SE(F)$ converges softly.
\end{enumerate}
\end{theorem}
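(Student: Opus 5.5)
We will prove the equivalences by showing that each of (ii)--(v) is equivalent to (i). The equivalence (i)$\Leftrightarrow$(ii) is exactly Theorem~\ref{thm:5.4}. In every other case, the direction from (i) is a coordinatewise limit argument. The converse directions all use the isometric embedding $\iota_t\colon F(t)\to\SE(F)$: a Cauchy sequence in one section is turned into a sequence supported on the single coordinate $t$.

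For (i)$\Rightarrow$(iv), let $(a_n)$ be componentwise Cauchy. Each $(a_n(t))$ has a limit $a(t)\in F(t)$, so $a\in\SE(F)$ and $a_n\to a$ componentwise. For (iv)$\Rightarrow$(i), let $(\alpha_n)$ be Cauchy in $F(t)$. Then $(\iota_t(\alpha_n))$ is componentwise Cauchy, because all coordinates other than $t$ are zero. Its componentwise limit $a$ gives $\alpha_n\to a(t)$ in $F(t)$.

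For (i)$\Rightarrow$(iii), let $(a_n)$ be uniformly Cauchy. The differences $a_n-a_{n_0}$ eventually lie in $\SE_{\bnd}(F)$ and form a Cauchy sequence for the supremum norm. The argument of Theorem~\ref{thm:5.4} then applies: take componentwise limits and pass to the limit $m\to\infty$ in $\sup_t\lVert a_m(t)-a_n(t)\rVert_t<\varepsilon$. This gives $\sup_t\lVert a(t)-a_n(t)\rVert_t\le\varepsilon$ for $n\ge n_0$, that is, $d_\infty(a_n,a)\to0$. This works even though $a_n$ itself need not be bounded. For (iii)$\Rightarrow$(i), note that $d_\infty(\iota_t(\alpha_m),\iota_t(\alpha_n))=\lVert\alpha_m-\alpha_n\rVert_t$. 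So a Cauchy sequence in $F(t)$ yields a uniformly Cauchy sequence, and its uniform limit $a$ satisfies $\alpha_n\to a(t)$.

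The soft case (v) is where care is needed, and I expect it to be the main obstacle. For (v)$\Rightarrow$(i), soft Cauchy for $\iota_t(\alpha_n)$ only constrains the coordinate $t$, where it is ordinary Cauchy; the other coordinates are $0\slt\varepsilon$. So $\iota_t(\alpha_n)$ is soft Cauchy, its soft limit $a$ gives $\alpha_n\to a(t)$, and $F(t)$ is complete.

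For (i)$\Rightarrow$(v), let $(a_n)$ be soft Cauchy and let $a$ be the componentwise limit supplied by (i). Given a strictly positive $\varepsilon$, apply the soft Cauchy condition to $\varepsilon/2$ to obtain a single $n_0$ with $\lVert a_m(t)-a_n(t)\rVert_t<\varepsilon(t)/2$ for all $t$ and all $m,n\ge n_0$. Letting $m\to\infty$ coordinatewise gives $\lVert a(t)-a_n(t)\rVert_t\le\varepsilon(t)/2<\varepsilon(t)$ for every $t$ and every $n\ge n_0$. Hence $\snorm(a_n-a)\slt\varepsilon$. The key point is to use the halved soft real, so that the non-strict inequality obtained in the limit still yields the strict pointwise inequality $\slt$, uniformly in $t$ for a single $n_0$.
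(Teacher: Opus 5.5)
Your proposal is correct and follows the paper's proof essentially step for step. You get (i)$\Leftrightarrow$(ii) from Theorem~\ref{thm:5.4}, the forward directions by taking coordinatewise limits and letting $m\to\infty$ in a halved Cauchy estimate (using $\varepsilon(t)/2$ in the soft case to keep the strict inequality), and each converse by applying the hypothesis to the supported selections $\iota_t(\alpha_n)$.
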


\begin{proof}
The equivalence of (i) and (ii) is Theorem~\ref{thm:5.4}.

Assume (i) and let $(a_n)$ be uniformly Cauchy. For each $t$, the sequence
$(a_n(t))$ has a limit $a(t)\in F(t)$. Given $\varepsilon>0$, choose $n_0$ such
that $d_\infty(a_m,a_n)<\varepsilon/2$ for $m,n\ge n_0$. Fix $n\ge n_0$ and let
$m\to\infty$ in each coordinate. Then
\[
 \lVert a(t)-a_n(t)\rVert_t\le\varepsilon/2
 \qquad(t\in A),
\]
so $d_\infty(a,a_n)\le\varepsilon/2<\varepsilon$. Thus (iii) holds.
Conversely, a Cauchy sequence $(\alpha_n)$ in $F(t)$ gives the uniformly Cauchy
sequence $(\iota_t(\alpha_n))$ in $\SE(F)$. A uniform limit yields a limit of
$(\alpha_n)$ in $F(t)$. Hence (iii) implies (i).

Taking coordinatewise limits proves that (i) implies (iv). For the
converse, apply (iv) to the supported selections $\iota_t(\alpha_n)$.

Assume (i) and let $(a_n)$ be soft Cauchy. Coordinatewise it is Cauchy, so define
$a(t)=\lim_na_n(t)$. Given a strictly positive soft real $\varepsilon$, choose
$n_0$ such that
\[
 \lVert a_m(t)-a_n(t)\rVert_t<\varepsilon(t)/2
 \qquad(t\in A,\ m,n\ge n_0).
\]
Fix $n\ge n_0$ and let $m\to\infty$ coordinatewise. Then
$\lVert a(t)-a_n(t)\rVert_t\le\varepsilon(t)/2<\varepsilon(t)$ for every $t$,
which proves soft convergence. Thus (v) holds. Conversely, applying (v) to
$\iota_t(\alpha_n)$ proves completeness of $F(t)$.
\end{proof}

\begin{definition}
A soft normed linear space satisfying the conditions of
Theorem~\ref{thm:7.2} is called a \emph{soft Banach space via soft
elements}.
\end{definition}

\begin{remark}
The three Cauchy conditions in Theorem~\ref{thm:7.2} are not
equivalent. On $\R^{\N}$, the constant selections $a_n(t)=1/n$ are uniformly
Cauchy but not soft Cauchy: for $\varepsilon(t)=1/(t+1)$ and any tail, choose
distinct $m,n$ in the tail and then $t$ so large that
$\varepsilon(t)<|1/m-1/n|$. The moving bumps $\iota_n(1)$ are componentwise
Cauchy but not uniformly Cauchy.
\end{remark}

\section{Soft linear operators}

Let $(F,\snorm_F)$ and $(G,\snorm_G)$ be soft normed linear spaces over the same
parameter set $A$. Write $\lVert\cdot\rVert_{F,t}$ and
$\lVert\cdot\rVert_{G,t}$ for the corresponding norms on $F(t)$ and $G(t)$.

\begin{definition}[Soft linear operator]
A map $T\colon\SE(F)\to\SE(G)$ is \emph{soft linear} if
\[
 T(a+b)=T(a)+T(b),\qquad T(ra)=rT(a)
\]
for all $a,b\in\SE(F)$ and every soft scalar $r\in\KA$.
\end{definition}

\begin{theorem}
A map $T\colon\SE(F)\to\SE(G)$ is soft linear if and only if there is a unique
family of linear maps $T_t\colon F(t)\to G(t)$ such that
\[
          T(a)(t)=T_t(a(t))
          \qquad(a\in\SE(F),\ t\in A).
\]
\end{theorem}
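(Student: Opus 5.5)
The plan mirrors the proof of Theorem~\ref{thm:4.4}, with the coordinate idempotents $e_t$ and the embeddings $\iota_t$ isolating sections.

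For the forward direction, let $T$ be soft linear and fix $t\in A$. The first step is to show that $T(a)(t)$ depends only on $a(t)$. If $a(t)=b(t)$, then $e_ta=e_tb$. Soft homogeneity with $r=e_t$ gives
\[
 e_tT(a)=T(e_ta)=T(e_tb)=e_tT(b),
\]
and evaluating at $t$ yields $T(a)(t)=T(b)(t)$. This lets us define
\[
 T_t(\alpha):=T\bigl(\iota_t(\alpha)\bigr)(t)\qquad(\alpha\in F(t)).
\]
It lands in $G(t)$ because $T(\iota_t(\alpha))\in\SE(G)$. Since $\iota_t(a(t))=e_ta$ agrees with $a$ at $t$, the first step gives $T(a)(t)=T_t(a(t))$ for every $a\in\SE(F)$.

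Linearity of $T_t$ comes from the same evaluation. We have $\iota_t(\alpha+\beta)=\iota_t(\alpha)+\iota_t(\beta)$ and $\iota_t(k\alpha)=\bar{k}\,\iota_t(\alpha)$. Applying additivity of $T$ and homogeneity with the constant soft scalar $\bar{k}$, then evaluating at $t$, gives $T_t(\alpha+\beta)=T_t(\alpha)+T_t(\beta)$ and $T_t(k\alpha)=kT_t(\alpha)$.

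Uniqueness is immediate. Any family $(S_t)$ with $T(a)(t)=S_t(a(t))$ must satisfy $S_t(\alpha)=T(\iota_t(\alpha))(t)=T_t(\alpha)$, because $\iota_t(\alpha)\in\SE(F)$ by the Proposition in Section~3.

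For the converse, suppose linear maps $T_t\colon F(t)\to G(t)$ are given and define $T(a)(t)=T_t(a(t))$. Then $T(a)\in\prod_tG(t)=\SE(G)$. Additivity holds coordinatewise. For $r\in\KA$,
\[
 T(ra)(t)=T_t\bigl(r(t)a(t)\bigr)=r(t)T_t(a(t))=(rT(a))(t),
\]
using only $\K$-linearity of $T_t$ in each coordinate. So $T$ is soft linear.

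The only step with real content is the well-definedness in the first step, and it is exactly where full soft-scalar homogeneity is used rather than constant homogeneity, as Example~\ref{ex:4.3} warns. Everything else is routine coordinatewise verification.
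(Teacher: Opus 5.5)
Your proof is correct and follows the paper's argument. Both isolate the $t$-th section via $e_tT(a)=T(e_ta)$, define $T_t(\alpha)=T(\iota_t(\alpha))(t)$, derive linearity from additivity and constant scalars, and obtain uniqueness and the converse coordinatewise. You only spell out details the paper leaves implicit, such as $T_t(\alpha)\in G(t)$ and $\iota_t(k\alpha)=\bar{k}\,\iota_t(\alpha)$.
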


\begin{proof}
A sectionwise family of linear maps defines a soft linear map. Conversely,
let $T$ be soft linear. Since $e_ta=\iota_t(a(t))$,
\[
 e_tT(a)=T(e_ta)=T\bigl(\iota_t(a(t))\bigr).
\]
Evaluating at $t$ shows that $T(a)(t)$ depends only on $a(t)$. Define
\[
       T_t(\alpha):=T(\iota_t(\alpha))(t).
\]
Then $T(a)(t)=T_t(a(t))$. Additivity and homogeneity for constant scalars show
that $T_t$ is linear, and the formula gives uniqueness.
\end{proof}

\begin{corollary}
For every $t\in A$ and $\alpha\in F(t)$, the set
\[
 \{T(a)(t):a\in\SE(F),\ a(t)=\alpha\}
\]
is a singleton.
\end{corollary}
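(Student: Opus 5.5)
The plan is to read this off the preceding theorem, in the same way the earlier corollary on soft norms was read off Theorem~\ref{thm:4.4}. Here $T$ is a soft linear operator. By the theorem just proved, there is a family of linear maps $T_t\colon F(t)\to G(t)$ with $T(a)(t)=T_t(a(t))$ for all $a\in\SE(F)$ and $t\in A$.

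First I show the displayed set is nonempty. For fixed $t\in A$ and $\alpha\in F(t)$, the selection $\iota_t(\alpha)$ lies in $\SE(F)$ by the first proposition of Section~3, and its $t$-th coordinate is $\alpha$. So the set contains $T(\iota_t(\alpha))(t)$.

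Next I show it has only one element. If $a\in\SE(F)$ satisfies $a(t)=\alpha$, then $T(a)(t)=T_t(a(t))=T_t(\alpha)$. Hence the set equals $\{T_t(\alpha)\}$.

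If one prefers not to use the decomposition, the direct argument from its proof also works. Suppose $a(t)=b(t)=\alpha$. Then $e_ta=e_tb$, so
\[
e_tT(a)=T(e_ta)=T(e_tb)=e_tT(b),
\]
and evaluating at $t$ gives $T(a)(t)=T(b)(t)$.

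No step is a real obstacle. The only point to check is nonemptiness, which holds because the sections are nonempty linear subspaces, so $\iota_t(\alpha)$ is an admissible selection. I would then remark that this is the operator analogue of condition (N5): the coordinate-independence requirement imposed in \cite{DSoperators} follows automatically from soft linearity.
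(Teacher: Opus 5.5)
Your proposal is correct and follows the paper's route: both identify the set with $\{T_t(\alpha)\}$ using the sectionwise formula $T(a)(t)=T_t(a(t))$, and your nonemptiness check via $\iota_t(\alpha)$ spells out a point the paper leaves implicit. The only difference is labeling: the paper identifies this as condition (L3) of \cite[Theorem~3.10]{DSoperators}, rather than as an operator analogue of (N5).
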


\begin{proof}
By the sectionwise formula in the preceding theorem, the displayed set is
$\{T_t(\alpha)\}$. This is condition (L3) in
\cite[Theorem~3.10]{DSoperators}, so (L3) follows from soft linearity.
\end{proof}

\begin{example}
On $\R^2$ with $A=\{1,2\}$, the map
$T(a)=(a(1)+a(2),a(2))$ is linear over the constant scalars, but it is not soft
linear because $T(e_1a)\ne e_1T(a)$ when $a(2)\ne0$. This is the operator
version of Example~\ref{ex:4.3}.
\end{example}

\begin{definition}
A soft linear operator $T$ is \emph{soft bounded} if there is a nonnegative soft
real $M\in\R^{A}$ such that
\[
      \snorm_G(Ta)\sle M\,\snorm_F(a)
      \qquad(a\in\SE(F)).
\]
\end{definition}

\begin{proposition}\label{prop:8.6}
Let $T$ have section operators $T_t$. Then $T$ is soft bounded if and only if
every $T_t$ is bounded. In this case
\[
       \lVert T\rVert_s(t):=\lVert T_t\rVert \qquad(t\in A)
\]
is the least soft bound.
\end{proposition}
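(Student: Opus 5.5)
The plan is to work entirely through the sectionwise decompositions: Theorem~\ref{thm:4.4} gives $\snorm_F(a)(t)=\lVert a(t)\rVert_{F,t}$ and $\snorm_G(Ta)(t)=\lVert T_t(a(t))\rVert_{G,t}$, and the preceding theorem on soft linear operators gives $T(a)(t)=T_t(a(t))$. The soft inequality $\snorm_G(Ta)\sle M\snorm_F(a)$ for all $a\in\SE(F)$ should then reduce to the family of scalar inequalities
\[
\lVert T_t(a(t))\rVert_{G,t}\le M(t)\,\lVert a(t)\rVert_{F,t}\qquad(t\in A,\ a\in\SE(F)).
\]

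For the forward direction, suppose $M$ is a soft bound and fix $t\in A$ and $\alpha\in F(t)$. Apply the inequality to the supported selection $a=\iota_t(\alpha)$, which lies in $\SE(F)$ by the first proposition of Section~3, and evaluate at $t$. This gives $\lVert T_t\alpha\rVert_{G,t}\le M(t)\lVert\alpha\rVert_{F,t}$. Hence $T_t$ is bounded, and $\lVert T_t\rVert\le M(t)$ for every $t$, that is, $\lVert T\rVert_s\sle M$.

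For the converse, suppose every $T_t$ is bounded and define $\lVert T\rVert_s(t):=\lVert T_t\rVert\ge0$. This is a nonnegative soft real. For any $a\in\SE(F)$ and $t\in A$, the operator-norm inequality gives $\lVert T_t(a(t))\rVert_{G,t}\le\lVert T_t\rVert\,\lVert a(t)\rVert_{F,t}$, which is exactly the $t$-th coordinate of $\snorm_G(Ta)\sle\lVert T\rVert_s\,\snorm_F(a)$. So $\lVert T\rVert_s$ is a soft bound.

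Combining this with the forward direction shows that $\lVert T\rVert_s$ is a soft bound lying below every soft bound in the pointwise order $\sle$, so it is the least soft bound. Uniqueness of a least element is automatic, since $\sle$ is antisymmetric.

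There is no real obstacle. The one point needing care is that a soft bound must control arbitrary $\alpha\in F(t)$ at coordinate $t$ without any constraint from the other coordinates. Testing on $\iota_t(\alpha)$ settles this, because its other coordinates vanish and so they impose nothing. One should also note that if $F(t)=\{0\}$, then $\lVert T_t\rVert=0$, which is harmless.
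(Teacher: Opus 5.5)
Your proof is correct and matches the paper's argument essentially step for step. The sectionwise operator-norm inequality shows that $\lVert T\rVert_s$ is a soft bound. Testing an arbitrary soft bound $M$ on the supported selections $\iota_t(\alpha)$ and evaluating at $t$ gives $\lVert T_t\rVert\le M(t)$, which yields both the boundedness of each $T_t$ and the minimality of $\lVert T\rVert_s$.
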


\begin{proof}
If every $T_t$ is bounded, then
\[
 \lVert T(a)(t)\rVert_{G,t}\le\lVert T_t\rVert\,\lVert a(t)\rVert_{F,t}
\]
for all $a$ and $t$. Conversely, apply a soft bound to
$\iota_t(\alpha)$ and evaluate at $t$. This gives
$\lVert T_t\alpha\rVert_{G,t}\le M(t)\lVert\alpha\rVert_{F,t}$, so
$\lVert T_t\rVert\le M(t)$.
\end{proof}

A soft linear operator is called \emph{soft sequentially continuous} if
$a_n\to a$ softly implies $T(a_n)\to T(a)$ softly.

\begin{theorem}
For a soft linear operator $T$, the following are equivalent.
\begin{enumerate}[label=\textup{(\roman*)}]
\item Every section operator $T_t$ is bounded.
\item $T$ is soft bounded.
\item $T\colon(\SE(F),\taubox)\to(\SE(G),\taubox)$ is continuous.
\item $T$ is soft sequentially continuous.
\end{enumerate}
\end{theorem}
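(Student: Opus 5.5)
The equivalence (i)$\Leftrightarrow$(ii) is Proposition~\ref{prop:8.6}, so I will prove the cycle (i)$\Rightarrow$(iii)$\Rightarrow$(iv)$\Rightarrow$(i). Throughout I will use the sectionwise representation $T(a)(t)=T_t(a(t))$ from the preceding theorem. Since $T$ is linear, it suffices to check continuity at $\theta$ and soft convergence of $T(a_n-a)$ to $\theta$.

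For (i)$\Rightarrow$(iii), I will use the fact that the soft balls form a basis of $\taubox$. Fix $a\in\SE(F)$ and a soft ball $B_s(T(a),\varepsilon)$ in $\SE(G)$. Define the strictly positive soft real
\[
\delta(t)=\frac{\varepsilon(t)}{\lVert T_t\rVert+1}.
\]
If $b\in B_s(a,\delta)$, then for every $t$,
\[
\lVert T(b)(t)-T(a)(t)\rVert_{G,t}\le\lVert T_t\rVert\,\lVert b(t)-a(t)\rVert_{F,t}<\varepsilon(t).
\]
Hence $T(B_s(a,\delta))\subseteq B_s(T(a),\varepsilon)$, and $T$ is continuous. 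The step (iii)$\Rightarrow$(iv) is immediate, because soft convergence coincides with convergence in $\taubox$ by the proposition on soft balls, and continuous maps preserve convergent sequences.

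For (iv)$\Rightarrow$(i), I argue by contradiction. Suppose some $T_t$ is unbounded. Then there are $\alpha_n\in F(t)$ with $\lVert\alpha_n\rVert_{F,t}=1/n$ and $\lVert T_t\alpha_n\rVert_{G,t}\ge1$. Put $a_n=\iota_t(\alpha_n)$. All coordinates of $a_n$ except the $t$-th vanish, so $a_n\to\theta$ softly: given $\varepsilon$, only the single value $\varepsilon(t)$ matters. On the other hand, $T(a_n)=\iota_t(T_t\alpha_n)$ has $t$-th norm at least $1$. So $T(a_n)\not\to\theta$ softly, as the constant soft real $\bar{1}$ shows. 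This contradicts (iv).

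The main point to watch is that (i)$\Rightarrow$(iii) needs no uniform bound across sections. The box topology allows the radius $\delta(t)$ to vary with $t$, and this is exactly what makes sectionwise boundedness sufficient.
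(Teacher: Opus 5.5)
Your proof is correct and follows the paper's argument. Both use (i)$\Leftrightarrow$(ii) via Proposition~\ref{prop:8.6}, the cycle (i)$\Rightarrow$(iii)$\Rightarrow$(iv)$\Rightarrow$(i), and the same supported selections $\iota_t(\alpha_n)$ with $\lVert\alpha_n\rVert_{F,t}\to0$ and $\lVert T_t\alpha_n\rVert_{G,t}\ge1$ for the contradiction. The only cosmetic difference is in (i)$\Rightarrow$(iii): you give an explicit soft-ball estimate with $\delta(t)=\varepsilon(t)/(\lVert T_t\rVert+1)$, whereas the paper observes that $T^{-1}\bigl(\prod_tV_t\bigr)=\prod_tT_t^{-1}(V_t)$ is box-open.
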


\begin{proof}
The equivalence of (i) and (ii) is Proposition~\ref{prop:8.6}. If (i)
holds, the inverse image of a basic box $\prod_tV_t$ is
$\prod_tT_t^{-1}(V_t)$, which is box-open. Hence (iii) holds, and (iii) implies
(iv).

Assume (iv) and suppose that $T_t$ is unbounded for some $t$. For each
$n\in\N$, choose $u_n\in F(t)$ such that
\[
       \lVert u_n\rVert_{F,t}=1,
       \qquad \lVert T_tu_n\rVert_{G,t}\ge n.
\]
Set $\alpha_n=u_n/n$. Then $\alpha_n\to0$ in $F(t)$, but
$\lVert T_t\alpha_n\rVert_{G,t}\ge1$. The supported selections
$\iota_t(\alpha_n)$ converge softly to $\theta$, while their images do not
converge even componentwise to $\theta$. This contradicts (iv).
\end{proof}

\begin{proposition}\label{prop:8.8}
Let $T$ be soft linear and put
\[
       q:=\sup_{t\in A}\lVert T_t\rVert\in[0,\infty],
\]
where $\lVert T_t\rVert=\infty$ when $T_t$ is unbounded. The following are
equivalent.
\begin{enumerate}[label=\textup{(\roman*)}]
\item $q<\infty$.
\item There is a constant $C<\infty$ such that
      \[
        d_\infty(Ta,Tb)\le C\,d_\infty(a,b)
      \]
      whenever $d_\infty(a,b)<\infty$; equivalently, $T$ is Lipschitz on every
      finite-distance component with one common constant.
\item $T\colon(\SE(F),\tauunif)\to(\SE(G),\tauunif)$ is continuous.
\item $T$ maps $\SE_{\bnd}(F)$ into $\SE_{\bnd}(G)$ and its restriction is a
      bounded linear operator.
\end{enumerate}
When these conditions hold, the least admissible constant in (ii) and the norm of
the restriction in (iv) are both equal to $q$.
\end{proposition}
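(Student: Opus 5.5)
We prove the cycle (i)$\Rightarrow$(ii)$\Rightarrow$(iii)$\Rightarrow$(i) and treat (iv) separately. Both (ii) and (iv) give the upper bound $q$ directly. For the converse directions we test on the supported selections $\iota_t(\alpha)$, exactly as in Proposition~\ref{prop:8.6}.

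For (i)$\Rightarrow$(ii), take $a,b$ with $d_\infty(a,b)<\infty$. Soft linearity and the sectionwise decomposition give
\[
 \lVert T(a)(t)-T(b)(t)\rVert_{G,t}=\lVert T_t(a(t)-b(t))\rVert_{G,t}\le q\,\lVert a(t)-b(t)\rVert_{F,t}.
\]
Taking the supremum over $t$ yields $d_\infty(Ta,Tb)\le q\,d_\infty(a,b)$. The same estimate with $b=\theta$ shows that $T$ maps $\SE_{\bnd}(F)$ into $\SE_{\bnd}(G)$ with $\lVert Ta\rVert_\infty\le q\lVert a\rVert_\infty$. So (i) implies (iv), with restriction norm at most $q$. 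The rephrasing of (ii) as a common Lipschitz constant on each component $a+\SE_{\bnd}(F)$ is immediate from the component description in Section~5.

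The implication (ii)$\Rightarrow$(iii) holds because a Lipschitz map is continuous on each component. Indeed, $T$ maps the ball $B_\infty(a,r/C)$ into $B_\infty(Ta,r)$; the case $C=0$ is trivial.

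For (iii)$\Rightarrow$(i), continuity at $\theta$ gives $\delta>0$ with $T(B_\infty(\theta,\delta))\subseteq B_\infty(\theta,1)$.
\begin{itemize}
\item Fix $t$ and $\alpha\in F(t)$ with $\lVert\alpha\rVert_{F,t}<\delta$. Then $\iota_t(\alpha)$ lies in $B_\infty(\theta,\delta)$ and $T(\iota_t(\alpha))=\iota_t(T_t\alpha)$, so $\lVert T_t\alpha\rVert_{G,t}<1$.
\item By homogeneity, $\lVert T_t\rVert\le 1/\delta$ for every $t$.
\item Hence $q\le1/\delta<\infty$.
\end{itemize}
This is the step needing care. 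A single uniform neighbourhood must control all sections at once. This works because $\iota_t$ is an isometry into the component of $\theta$, so no section can escape. Continuity at other points is not needed.

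For (iv)$\Rightarrow$(i), apply the restriction to $\iota_t(\alpha)\in\SE_{\bnd}(F)$. This gives $\lVert T_t\alpha\rVert_{G,t}\le\lVert T|_{\SE_{\bnd}}\rVert\,\lVert\alpha\rVert_{F,t}$, so $q\le\lVert T|_{\SE_{\bnd}}\rVert$. Likewise, applying (ii) to the pair $\iota_t(\alpha),\theta$ shows that $q\le C$ for every admissible $C$.

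Combining these lower bounds with the upper bounds from the first step gives the final claim. The least admissible constant in (ii) and the norm of the restriction in (iv) both equal $q$.
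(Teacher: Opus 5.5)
Your proposal is correct and follows essentially the same route as the paper. Both use the sectionwise estimates for (i)$\Rightarrow$(ii),(iv) and Lipschitz-implies-continuity for (ii)$\Rightarrow$(iii); both test on the supported selections $\iota_t(\alpha)$ for continuity at $\theta$, for the converse directions, and for the optimal constants. The only cosmetic difference is that your homogeneity step gives $\lVert T_t\rVert\le 1/\delta$, whereas the paper obtains $2/\delta$ by evaluating at $(\delta/2)\iota_t(\alpha)$.
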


\begin{proof}
If (i) holds, taking suprema of the sectionwise estimates proves (ii) with $C=q$.
Condition (ii) gives continuity on each finite-distance component and hence
continuity for $\tauunif$. Suppose (iii) holds. Continuity at $\theta$ gives
$\delta>0$ such that
$d_\infty(a,\theta)<\delta$ implies $d_\infty(Ta,\theta)<1$.
For a nonzero section $F(t)$ and $\alpha\in F(t)$ with
$\lVert\alpha\rVert_{F,t}=1$, apply this to
$a=(\delta/2)\iota_t(\alpha)$. Then
$\lVert T_t\alpha\rVert_{G,t}<2/\delta$, independently of $t$. The zero sections
have operator norm zero, so $q\le2/\delta<\infty$.

If (i) holds, then
$\lVert Ta\rVert_\infty\le q\lVert a\rVert_\infty$, so (iv) holds with operator
norm at most $q$. Conversely, if the restriction has norm $C$, applying it to
$\iota_t(\alpha)$ gives $\lVert T_t\rVert\le C$ for every $t$. Thus $q\le C$.
The same supported selections show that every constant in (ii) is at least $q$.
Hence both optimal constants are equal to $q$.
\end{proof}

\begin{corollary}
Assume every $T_t$ is bounded and
$T(\SE_{\bnd}(F))\subseteq\SE_{\bnd}(G)$. Then
$\sup_t\lVert T_t\rVert<\infty$, and the restriction of $T$ is bounded.
\end{corollary}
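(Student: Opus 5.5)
The corollary follows from Proposition~\ref{prop:8.8} by verifying condition (iv) there. What is missing is that the restriction of $T$ to $\SE_{\bnd}(F)$ is a bounded linear operator. Once that is in hand, the proposition gives $q=\sup_t\lVert T_t\rVert<\infty$ and identifies $q$ with the norm of the restriction. Linearity of the restriction is automatic, so the task is to prove boundedness from the hypothesis $T(\SE_{\bnd}(F))\subseteq\SE_{\bnd}(G)$.

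I would argue directly by contradiction, using supported selections and a patch. Suppose $\sup_t\lVert T_t\rVert=\infty$. Since each $\lVert T_t\rVert$ is finite, there must be distinct parameters $t_1,t_2,\ldots$ with $\lVert T_{t_j}\rVert>j^2$. Choose unit vectors $u_j\in F(t_j)$ with $\lVert T_{t_j}u_j\rVert_{G,t_j}>j^2$. Define $a\in\SE(F)$ by $a(t_j)=u_j/j$ and $a(t)=0$ for all other $t$. Then $\lVert a\rVert_\infty\le1$, so $a\in\SE_{\bnd}(F)$.

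By the sectionwise formula $T(a)(t_j)=T_{t_j}(u_j)/j$, which has norm greater than $j$. Hence $T(a)\notin\SE_{\bnd}(G)$, contradicting the hypothesis. It follows that $q<\infty$. Proposition~\ref{prop:8.8}, (i)$\Rightarrow$(iv), then shows that the restriction is bounded with norm $q$.

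The main obstacle is producing infinitely many \emph{distinct} parameters. This needs a short justification. If only finitely many $t$ satisfied $\lVert T_t\rVert>K$ for every $K$, the supremum would be attained among finitely many finite values. So when the supremum is infinite, the sets $\{t:\lVert T_t\rVert>K\}$ are all nonempty. They are also infinite, since otherwise their maximum would bound the supremum. This allows an inductive choice of new $t_j$, each distinct from the ones already chosen. The construction of $a$ is legitimate because $\SE(F)$ is closed under arbitrary patching (Theorem~\ref{thm:3.3}).

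A closed-graph argument would work as an alternative, but it requires completeness, so I would avoid it.
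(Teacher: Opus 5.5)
Your proof is correct and is essentially the paper's argument. Both assume $\sup_t\lVert T_t\rVert=\infty$, pick distinct parameters with growing section norms, patch near-norming vectors into a bounded selection whose image is unbounded, and conclude via Proposition~\ref{prop:8.8} (i)$\Rightarrow$(iv). The differences are cosmetic. The paper uses unit vectors $\alpha_n$ with $\lVert T_{t_n}\alpha_n\rVert_{G,t_n}>n$ directly, so your $j^2$ thresholds and $1/j$ scaling are harmless but unnecessary. Also, your opening says you will verify condition (iv), but your argument actually verifies (i).
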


\begin{proof}
Suppose that the section norms are not uniformly bounded. Since every
$\lVert T_t\rVert$ is finite, choose distinct parameters $t_n$ such that
$\lVert T_{t_n}\rVert>n$. For each $n$, choose $\alpha_n\in F(t_n)$ with
\[
 \lVert\alpha_n\rVert_{F,t_n}=1,
 \qquad
 \lVert T_{t_n}\alpha_n\rVert_{G,t_n}>n.
\]
Patch the $\alpha_n$ at the coordinates $t_n$ and use zero elsewhere. The
resulting selection is bounded, but its image is not. This contradicts the
assumption.
\end{proof}

\begin{example}
Let $A=\N$, $F(t)=G(t)=\R$, and $T_t=t\,I$. Every section operator is bounded,
so $T$ is soft bounded and box-continuous. However,
$\sup_t\lVert T_t\rVert=\infty$, and $T$ does not map $\ell^\infty$ into itself:
the constant selection $1$ is sent to $(t)_{t\in\N}$.
\end{example}

\section{Applications}

\subsection{A Neumann-series theorem}

Let $(F,\snorm)$ be a soft Banach space. Let $I$ denote the identity on
$\SE(F)$ and $I_t$ the identity on $F(t)$.

\begin{theorem}
Let $S\colon\SE(F)\to\SE(F)$ be soft linear and suppose
\[
          q:=\sup_{t\in A}\lVert S_t\rVert<1.
\]
Then $I-S$ is a bijection of $\SE(F)$ and
\[
 (I-S)^{-1}(b)(t)=(I_t-S_t)^{-1}b(t),
 \qquad
 \snorm\bigl((I-S)^{-1}b\bigr)
 \sle \frac{1}{1-q}\,\snorm(b).
\]
Moreover, $I-S$ restricts to a bounded automorphism of $\SE_{\bnd}(F)$, and
\[
       (I-S)^{-1}=\sum_{n=0}^{\infty}S^n
\]
with convergence in the operator norm on $\SE_{\bnd}(F)$.
\end{theorem}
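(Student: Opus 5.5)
The proof reduces everything to the sections via the decomposition theorem for soft linear operators, then reassembles. Since $(F,\snorm)$ is a soft Banach space, each $F(t)$ is Banach by Theorem~\ref{thm:7.2}. Each $S_t$ is bounded with $\lVert S_t\rVert\le q<1$, so the classical Neumann series shows that $I_t-S_t$ is invertible on $F(t)$. Moreover, $(I_t-S_t)^{-1}=\sum_n S_t^n$ and $\lVert (I_t-S_t)^{-1}\rVert\le 1/(1-q)$. By the sectionwise decomposition, $I-S$ is soft linear with section operators $I_t-S_t$, since $(I-S)(a)(t)=a(t)-S_t a(t)$.

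Next define $R\colon\SE(F)\to\SE(F)$ by $R(b)(t)=(I_t-S_t)^{-1}b(t)$. Because the sections of $R(b)$ lie in $F(t)$, this is a genuine selection, and $R$ is soft linear by the converse direction of the same theorem. The identities $(I-S)R=I$ and $R(I-S)=I$ hold coordinatewise, so $I-S$ is a bijection with inverse $R$. The soft norm bound follows coordinatewise from $\lVert (I_t-S_t)^{-1}b(t)\rVert_t\le \frac{1}{1-q}\lVert b(t)\rVert_t$, read as $\snorm(Rb)\sle\overline{1/(1-q)}\,\snorm(b)$ via Theorem~\ref{thm:4.4}.

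For the bounded part, Proposition~\ref{prop:8.8} applies with $\sup_t\lVert S_t\rVert=q<\infty$. Hence $S$ restricts to a bounded operator on $\SE_{\bnd}(F)$ with norm at most $q$. Also $R$ has $\sup_t\lVert R_t\rVert\le 1/(1-q)$, so it restricts to a bounded operator as well, and $I-S$ is a bounded automorphism of $\SE_{\bnd}(F)$. Since $\SE_{\bnd}(F)$ is Banach by Theorem~\ref{thm:5.4}, the operator algebra on it is Banach. The restriction of $S$ has norm at most $q<1$, so $\sum_n S^n$ converges in operator norm to an inverse of $I-S$ restricted to $\SE_{\bnd}(F)$. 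By uniqueness of inverses, this limit coincides with the restriction of $R$.

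The main obstacle is the identification of norms and restrictions, namely checking that the operator norm of $S^n$ on $\SE_{\bnd}(F)$ is at most $q^n$. I will handle it through $(S^n)_t=S_t^n$ and Proposition~\ref{prop:8.8}, or simply through submultiplicativity. Note that the series statement is only claimed on $\SE_{\bnd}(F)$, where a norm exists; on all of $\SE(F)$ I will not claim operator-norm convergence.
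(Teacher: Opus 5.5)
Your proposal is correct and follows essentially the same route as the paper. Both arguments apply the Neumann series on each Banach section $F(t)$, patch the inverses $R_t$ into a soft linear $R$, and then use Proposition~\ref{prop:8.8} to bound $S$ on $\SE_{\bnd}(F)$ so that $\sum_n S^n$ converges in operator norm; you additionally make explicit the completeness of $\SE_{\bnd}(F)$ via Theorem~\ref{thm:5.4} and identify the series limit with $R$ by uniqueness of inverses, both of which the paper leaves implicit.
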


\begin{proof}
For each $t\in A$, the series
\[
          R_t:=\sum_{n=0}^{\infty}S_t^n
\]
converges in operator norm on the Banach space $F(t)$ because
$\lVert S_t\rVert\le q<1$. It satisfies
\[
 (I_t-S_t)R_t=R_t(I_t-S_t)=I_t,
 \qquad
 \lVert R_t\rVert\le\frac{1}{1-q}.
\]
Define $R\colon\SE(F)\to\SE(F)$ by
\[
             R(b)(t)=R_tb(t).
\]
Then, coordinatewise,
\[
          (I-S)R=I,\qquad R(I-S)=I.
\]
Thus $R=(I-S)^{-1}$, and the estimate for $R_t$ gives the stated soft bound.

By Proposition~\ref{prop:8.8}, the restriction of $S$ to
$\SE_{\bnd}(F)$ has norm at most $q$. For $N\ge m$,
\[
 \left\lVert\sum_{n=m}^{N}S^n\right\rVert
 \le\sum_{n=m}^{N}q^n
 \le\frac{q^m}{1-q}.
\]
Hence the series converges in operator norm on $\SE_{\bnd}(F)$. Its limit is the
restriction of $R$, so $I-S$ is a bounded automorphism of
$\SE_{\bnd}(F)$.
\end{proof}

\begin{proposition}
Suppose $\lVert S_t\rVert<1$ for every $t$, with no common bound below one. Then
$I-S$ is a bijection of $\SE(F)$ and its inverse is soft bounded, with
\[
 \lVert(I_t-S_t)^{-1}\rVert\le\frac{1}{1-\lVert S_t\rVert}.
\]
The restriction of $I-S$ to $\SE_{\bnd}(F)$ need not be onto.
\end{proposition}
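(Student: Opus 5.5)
The plan is to argue sectionwise, exactly as in the proof of the preceding Neumann-series theorem, but with the contraction constant now depending on $t$.

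\textbf{Inverse on $\SE(F)$.} Fix $t\in A$ and write $q_t:=\lVert S_t\rVert<1$. Since $(F,\snorm)$ is a soft Banach space, $F(t)$ is Banach by Theorem~\ref{thm:7.2}. Hence the series $R_t:=\sum_{n\ge0}S_t^n$ converges in operator norm on $F(t)$. It satisfies $(I_t-S_t)R_t=R_t(I_t-S_t)=I_t$, and
\[
\lVert R_t\rVert\le\sum_{n\ge0}q_t^n=\frac{1}{1-q_t}.
\]
Define $R(b)(t):=R_tb(t)$. The sectionwise identities give $(I-S)R=R(I-S)=I$ on $\SE(F)$, so $I-S$ is a bijection with inverse $R$. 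The operator $R$ is soft linear, since it is given by a family of linear section maps. Every section operator $R_t$ is bounded, so Proposition~\ref{prop:8.6} shows that $R$ is soft bounded, with least soft bound $\lVert R\rVert_s(t)=\lVert R_t\rVert\le 1/(1-\lVert S_t\rVert)$. This proves the displayed estimate. No uniform bound is needed at any point in this part.

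\textbf{Failure of surjectivity on $\SE_{\bnd}(F)$.} For the last claim a counterexample suffices, and I would take the simplest one: $A=\N$, $F(t)=\R$ with the absolute value, and $S_t=(1-1/t)I$. Then $\lVert S_t\rVert=1-1/t<1$ for every $t$, and $\sup_t\lVert S_t\rVert=1$. The restriction of $I-S$ maps $\SE_{\bnd}(F)=\ell^\infty(\N)$ into itself, since $(I_t-S_t)=\tfrac1t I$, so $(I-S)a(t)=a(t)/t$. Take the constant selection $b=\bar 1\in\SE_{\bnd}(F)$. Its unique preimage in $\SE(F)$ is $R(b)(t)=t$, which is unbounded. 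Because $I-S$ is injective on all of $\SE(F)$, no bounded selection maps to $b$, so the restriction is not onto.

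\textbf{Main obstacle.} The only delicate point is that failure of surjectivity must be shown using uniqueness of preimages in the larger space $\SE(F)$; injectivity there, established in the first part, settles it. I also need to check that the example meets the hypothesis ``no common bound below one''. It does, since $\sup_t(1-1/t)=1$.
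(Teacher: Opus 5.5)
Your proof is correct and follows the paper's argument. Like the paper, you apply the classical Neumann series in each Banach section, patch the inverses, get soft boundedness from Proposition~\ref{prop:8.6}, and use a diagonal scalar counterexample on $\ell^\infty(\N)$ in which $\bar1$ has an unbounded unique preimage; your $S_t=(1-1/t)I$ in place of the paper's $S_t=\bigl(1-\frac{1}{t+1}\bigr)I$ changes nothing essential.
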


\begin{proof}
Apply the classical Neumann theorem in each section and patch the inverses.
For the failure on bounded selections, take
$A=\N=\{1,2,\ldots\}$, $F(t)=\R$, and
\[
       S_t=\left(1-\frac1{t+1}\right)I.
\]
Then $(I-S)a(t)=a(t)/(t+1)$. On $\ell^\infty$, this is an injective bounded
operator of norm
\[
       \sup_{t\ge1}\frac1{t+1}=\frac12,
\]
but it is not onto: the constant selection $1$ has the unique preimage
$a(t)=t+1$, which is unbounded.
\end{proof}

\subsection{Sectionwise contractions and the mode of convergence}

\begin{theorem}
Let $(F,\snorm)$ be a soft Banach space and let
$T\colon\SE(F)\to\SE(F)$ be sectionwise:
$T(a)(t)=T_t(a(t))$. Suppose each $T_t$ is a contraction with constant
$k(t)<1$.
\begin{enumerate}[label=\textup{(\alph*)}]
\item $T$ has a unique fixed point $a^*\in\SE(F)$, and every Picard iteration
      $a_{n+1}=T(a_n)$ converges to $a^*$ componentwise.
\item If $q:=\sup_tk(t)<1$ and
      \[
       D:=\sup_{t\in A}\lVert a_0(t)-T_t(a_0(t))\rVert_t<\infty,
      \]
      then
      \[
       d_\infty(a_n,a^*)\le\frac{q^nD}{1-q},
      \]
      so the convergence is uniform.
\end{enumerate}
\end{theorem}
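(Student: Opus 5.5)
The plan is to work section by section, using the Banach fixed-point theorem in each $F(t)$, and then patch the results together. By Theorem~\ref{thm:7.2}, every section $(F(t),\lVert\cdot\rVert_t)$ is Banach. Each $T_t\colon F(t)\to F(t)$ is a contraction with constant $k(t)<1$, so it has a unique fixed point $\alpha_t^*\in F(t)$. Define $a^*(t)=\alpha_t^*$. Then $a^*\in\SE(F)=\prod_tF(t)$, and $T(a^*)(t)=T_t(\alpha_t^*)=\alpha_t^*$, so $a^*$ is a fixed point of $T$. For uniqueness, let $b$ be any fixed point. Evaluating at $t$ gives $T_t(b(t))=b(t)$, hence $b(t)=\alpha_t^*$ for every $t$, and therefore $b=a^*$.

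For the Picard iteration in (a), observe that $a_n(t)=T_t^n(a_0(t))$, since $T$ acts sectionwise. So the $t$-th coordinates form exactly the Picard sequence of $T_t$ starting at $a_0(t)$. The classical theorem gives $a_n(t)\to\alpha_t^*$ for each $t$, which is componentwise convergence. Nothing uniform is claimed here, so no bound on $k$ is needed.

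For (b), the plan is to use the standard a priori estimate in each section and then take the supremum. In section $t$:
\[
 \lVert a_{n+1}(t)-a_n(t)\rVert_t\le k(t)^n\lVert a_1(t)-a_0(t)\rVert_t\le q^nD.
\]
Summing the geometric tail then gives
\[
 \lVert a_n(t)-\alpha_t^*\rVert_t\le\sum_{j\ge n}q^jD=\frac{q^nD}{1-q}.
\]
An equivalent route is the bound $\lVert x-\alpha_t^*\rVert_t\le\lVert x-T_tx\rVert_t/(1-k(t))$ applied to $x=a_n(t)$. Here $a_1(t)-a_0(t)=T_t(a_0(t))-a_0(t)$, whose norm is at most $D$ by hypothesis. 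Since the right-hand side does not depend on $t$, taking $\sup_t$ yields $d_\infty(a_n,a^*)\le q^nD/(1-q)$. This tends to $0$ because $q<1$, so the convergence is uniform.

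The only point that needs care is making the estimates independent of $t$. This works because $k(t)\le q$ and the initial displacement is at most $D$ in every section. Without either hypothesis the tail bounds could blow up along some sequence of parameters, and then only componentwise convergence would survive, as in (a). Everything else reduces to the classical Banach fixed-point theorem applied in each section.
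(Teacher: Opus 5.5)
Your proof is correct and follows the paper's argument: sectionwise completeness from Theorem~\ref{thm:7.2}, Banach's fixed-point theorem in each $F(t)$, patching the fixed points into $a^*$, and taking the supremum over $t$ of a sectionwise estimate that does not depend on $t$. The only difference is cosmetic and lies in (b): the paper first bounds $\lVert a_0(t)-\alpha_t^*\rVert_t\le D/(1-q)$ and then contracts $n$ times using $k(t)^n\le q^n$, whereas you sum the tail of the increments $\lVert a_{j+1}(t)-a_j(t)\rVert_t\le q^jD$; both are the standard contraction estimate.
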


\begin{proof}
Banach's contraction theorem applies to each $T_t$; see
\cite{Banach1922}. Hence $T_t$ has a unique fixed point $\alpha_t^*$, and
$T_t^n(a_0(t))\to\alpha_t^*$. The patch $a^*(t)=\alpha_t^*$ is the unique
fixed point of $T$, and the convergence is componentwise.

For (b), the classical contraction estimate gives
\[
 \lVert a_0(t)-\alpha_t^*\rVert_t
 \le \frac{\lVert a_0(t)-T_t(a_0(t))\rVert_t}{1-k(t)}
 \le\frac{D}{1-q}.
\]
Therefore
\[
 \lVert a_n(t)-\alpha_t^*\rVert_t
 \le k(t)^n\lVert a_0(t)-\alpha_t^*\rVert_t
 \le\frac{q^nD}{1-q}.
\]
Taking the supremum over $t$ proves the result.
\end{proof}

\begin{example}[Uniform convergence need not be soft]
Let $A=\N$, $F(t)=\R$, fix $0<q<1$, and set
$T_t(x)=qx+(1-q)$. Starting from $a_0=\theta$, one has
$a_n(t)=1-q^n$ and the fixed point is $a^*=\bar1$. Thus
$d_\infty(a_n,a^*)=q^n\to0$. Define the strictly positive soft real
$\varepsilon(t)=q^t$. For every $n\ge0$, choose $t=n+1$; then
\[
 \lVert a_n(t)-a^*(t)\rVert=q^n>q^{n+1}=\varepsilon(t).
\]
Thus the Picard sequence converges geometrically in $d_\infty$ but does not
converge softly.
\end{example}

\section{Conclusion}

Soft norms and soft linear operators on $\SE(F)$ decompose into families on the
sections $F(t)$. The box, extended-supremum, and product structures distinguish
soft, uniform, and componentwise behavior, while common section bounds control
uniform operator results. It remains open whether the coordinate-idempotent
method extends to soft locally convex spaces defined by families of soft
seminorms.

\section*{Conflicts of Interest} \medskip
The author declares that there are no conflicts of interest regarding the
publication of this article.

\section*{Authors Contribution Statement} \medskip
The author conceived the study, proved the results, prepared the manuscript,
and approved the final version.

\end{document}